\newtheorem{theorem}{Theorem}[section]
\newtheorem{proposition}[theorem]{Proposition}
\newtheorem{corollary}[theorem]{Corollary}
\theoremstyle{definition}
\newtheorem{definition}[theorem]{Definition}
\newtheorem{example}[theorem]{Example}
\newtheorem{remark}[theorem]{Remark}
\numberwithin{equation}{section}
\newcommand{\blankbox}[2]
\begin{document}
\title{A class of Lie conformal superalgebras in higher dimensions}
\thanks{Project supported by the Scientific Research Foundation of Zhejiang Agriculture and Forestry University (No.2013FR081)}
\author{Yanyong Hong}
\address{College of Science, Zhejiang Agriculture and Forestry University,
Hangzhou, 311300, P.R.China}
\email{hongyanyong2008@yahoo.com}

\subjclass[2010]{17B60, 17B63, 17B67, 17B69, 17D99}
\keywords{Lie conformal superalgebra, Gel'fand-Dorfman bialgebra, Novikov-Poisson superalgebra, Novikov conformal superalgebra}

\begin{abstract}
Fix a positive integer number $r$. A class of $r$-dim Lie conformal superalgebras named $r$-dim $i$-linear Lie conformal superalgebras are studied for $1\leq i \leq r$. We present an equivalent characterization of this class of Lie conformal superalgebras in $r$ dimension. Moreover, by this equivalent characterization, several constructions and examples are given.
\end{abstract}

\maketitle

\section{Introduction}
Throughout this paper, set $r$ a positive integer number and denote by $\mathbb{C}$ the field of complex
numbers; $\mathbf{N}$ the set of natural numbers, i.e.
$\mathbf{N}=\{0, 1, 2,\cdots\}$; $\mathbb{Z}$ the set of integer
numbers. And,  tensors over $\mathbb{C}$ are denoted by  $\otimes$.

Lie conformal superalgebra, introduced by Kac in \cite{K1}, gives an axiomatic decription of the singular part of the operator product expansion of chiral fields in conformal field theory. It is an useful tool to study vertex superalgebras (see \cite{K1}) and has many applications in the theory of infinite-dimensional Lie superalgebras and integrable systems. In special, the category of Lie conformal superalgebras is equivalent to the category of some infinite-dimensional Lie superalgebras named formal distribution Lie superalgebras in some sense (see \cite{K1}). The main examples of Lie conformal superalgebras are Virasoro conformal algebra, current conformal superalgebras, Neveu-Schwarz conformal superalgebra and so on. It is known that Lie conformal superalgebra is a $\mathbb{C}[T]$-module. Therefore, there is a natural generalization, i.e. we change $\mathbb{C}[T]$ by $\mathbb{C}[T_1,\cdots,T_n]$. This is just the $n$-dim Lie conformal superalgebra (see \cite{BKV}). More general, $\mathbb{C}[T]$ can be generalized to a cocommutative Hopf algebra. This is the case of Lie pseudoalgebras (see \cite{BDK}).
 The
structure theory \cite{DK1}, representation theory \cite{CK1, CK2}
and cohomology theory \cite{BKV} of finite Lie conformal algebras
has been developed.

A class of Lie conformal superalgebras named ``quadratic" Lie conformal superalgebras are also studied in \cite{GD, Xu1}. It was essentially stated in \cite{GD} that a ``quadratic" Lie conformal superalgebra is equivalent to a bialgebra structure, one is a Lie superalgebra structure , the other is a Novikov superalgebra structure and they satisfy a compatible condition. The above bialgebra structure is called super Gel'fand-Dorfman bialgebra by Xu in \cite{Xu1}. In fact, a quadratic Lie conformal algebra corresponds to a Hamiltonian pair in \cite{GD}, which plays fundamental roles in completely integrable systems. Then, Xu in \cite{Xu1} presented several constructions of super Gel'fand-Dorfman bialgebras and classified the Gel'fand-Dorfman bialgebra structures on simple Novikov algebras. Therefore, there are some natural questions that whether there exists a similar characterization of Lie conformal superalgebras in higher dimensions and if there exists, what is the corresponding algebraic structure. In this paper, we study this question. We prove that an $r$-dim $i$-linear Lie conformal superalgebra (see Definition \ref{def1}) is equivalent to one $(r-1)$-dim conformal superalgebra with a Lie conformal superalgebra structure  and a Novikov conformal superalgebra structure adjoint with some compatible conditions. We call such structure $(r-1)$-dim super Gel'fand-Dorfman conformal bialgebra. In special, we also show that an $r$-dim linear Lie conformal superalgebra (see Definition \ref{def1}) is equivalent to one vector space with a Lie superalgebra structure and $r$ Novikov superalgebra structures adjoint with some compatible conditions. Here, any one of $r$ Novikov superalgebra structures and the Lie superalgebra structure construct a super Gel'fand-Dorfman bialgebra structure. We call such an algebraic structure a generalized super Gel'fand-Dorfman algebra. Moreover, we present several constructions and examples of super Gel'fand-Dorfman conformal bialgebras in $r$ dimension and generalized super Gel'fand-Dorfman algebras. Since one $r$-dim Lie conformal superalgebra corresponds to one infinite-dimensional Lie superalgebra, we can give many interesting examples of infinite-dimensional Lie superalgebras using super Gel'fand-Dorfman conformal bialgebras in $r$ dimension and generalized super Gel'fand-Dorfman algebras.

This paper is organized as follows. In Section 2, we introduce some basic definitions and some facts about $r$-dim Lie conformal superalgebras and Novikov conformal superalgebras. The definitions of $r$-dim $i$-linear Lie conformal superalgebras  and linear Lie conformal superalgebras are presented for $1\leq i\leq r$.
In Section 3, we mainly present two equivalent characterizations of $r$-dim $i$-linear Lie conformal superalgebras  and linear Lie conformal superalgebras. Several constructions and examples are given.

\section{Preliminaries}
In this section, we will introduce some basic definitions and some facts
about $r$-dim Lie conformal superalgebras and $r$-dim Novikov conformal superalgebras. These facts can be found in \cite{BKV, K1}.

First, we introduce the definitions of Lie superalgebra (see \cite{K2}) and Lie-Poisson superalgebra.
\begin{definition}
A \emph{Lie superalgebra} $\mathfrak{g}$ is a $\mathbb{Z}/2\mathbb{Z}$-graded algebra $\mathfrak{g}=\mathfrak{g}_{\overline{0}}\oplus \mathfrak{g}_{\overline{1}}$ with an operation $[\cdot, \cdot]$ satisfying the following axioms:
\begin{eqnarray*}
[a,b]=-(-1)^{\alpha\beta}[b,a],~~[a,[b,c]]=[[a,b],c]+(-1)^{\alpha\beta}[b,[a,c]],
\end{eqnarray*}
for $a\in \mathfrak{g}_{\alpha}$, $b\in \mathfrak{g}_{\beta}$ and $c\in \mathfrak{g}$.

A \emph{Lie-Poisson superalgebra} $(A, [\cdot,\cdot], \cdot)$ is a $\mathbb{Z}/2\mathbb{Z}$-graded vector space $A=A_{\overline{0}}\oplus A_{\overline{1}}$ with two operations $[\cdot, \cdot]$ and $\cdot$ such that
$(A, [\cdot,\cdot])$ forms a Lie superalgebra and $(A,\cdot)$ forms a commutative associative superalgebra with the compatible condition:
\begin{eqnarray*}
[a, b\cdot c]=[a,b]\cdot c+(-1)^{\alpha\beta}b\cdot [a,c],  ~~~~a\in A_\alpha, ~~b\in A_\beta, c\in A.
\end{eqnarray*}
If $A_{\overline{1}}=\{0\}$, it is called \emph{Lie-Poisson algebra}.
\end{definition}

Set $\mathbf{T}=(T_1,\cdots,T_r)$ and $\mathbf{\lambda}=(\lambda_1,\cdots,\lambda_r)$. Then, $\mathbf{T}$ and $\mathbf{\lambda}$ are $r$ dimensional vector variables. Assume that $\mathbf{m}=(m_1,\cdots, m_r)\in \mathbb{Z}^r$ and $e_i=(\underbrace{0, \cdots, 1}_i, \cdots, 0)\in\mathbb{Z}^r$ for $1\leq i\leq r$. Set $\mathbb{C}[\mathbf{T}]=\mathbb{C}[T_1,\cdots, T_r]$.

Next, we introduce the definition of $r$-dim Lie conformal superalgebra.

\begin{definition}
An \emph{$r$-dim Lie conformal superalgebra} is a left $\mathbb{Z}/2\mathbb{Z}$-graded $\mathbb{C}[\mathbf{T}]$-module $R=R_{\overline{0}}\oplus R_{\overline{1}}$ endowed with a $\mathbf{\lambda}$-bracket $[a_{\mathbb{\lambda}}b]$ which defines a $\mathbb{C}$-bilinear map $R\otimes R\rightarrow R[\mathbf{\lambda}]$, where $R[\mathbf{\lambda}]=\mathbb{C}[\mathbf{\lambda}]\otimes R=\mathbb{C}[\lambda_1,\cdots, \lambda_r]\otimes R$, subject to the following axioms:
\begin{eqnarray}
&&\label{a6}[T_ia_{\mathbf{\lambda}}b]=-\lambda_i[a_{\mathbf{\lambda}}b],\qquad [a_{\mathbf{\lambda}}T_ib]=(T_i+\lambda_i)[a_{\mathbf{\lambda}}b],\\
&& \label{a7}[a_{\mathbf{\lambda}}b]=-(-1)^{\alpha\beta}[b_{-(\mathbf{\lambda}+\mathbf{T})}a],\\
&&\label{a8} [a_{\mathbf{\lambda}}[b_{\mathbf{\mu}}c]]=[[a_{\mathbf{\lambda}}b]_{\mathbf{\lambda}+\mathbf{\mu}}c]+(-1)^{\alpha\beta}[b_{\mathbf{\mu}}[a_{\mathbf{\lambda}}c]],
\end{eqnarray}
for $a\in R_{\alpha}$, $b\in R_{\beta}$ and $c\in R$. When $R_{\overline{1}}=\{0\}$, $R$ is called \emph{$r$-dim Lie conformal algebra}. If $r=1$,
$R$ is called \emph{Lie conformal superalgebra}.
\end{definition}

If $\mathbf{\lambda}=(\lambda_1, \cdots, \lambda_r)$ and $\mathbf{m}=(m_1,\cdots,m_r)$, we set $\mathbf{\lambda}^{(\mathbf{m})}=\lambda_1^{(m_1)}\cdots\lambda_r^{(m_r)}=\frac{\lambda_1^{m_1}}{(m_1)!}\cdots\frac{\lambda_r^{m_r}}{(m_r)!}$.
We can rewrite
$$[a_{\mathbf{\lambda}}b]=\sum_{\mathbf{m}\in \mathbb{Z}_{+}^r}\mathbf{\lambda}^{(\mathbf{m})}a_{(\mathbf{m})}b.$$

Given an $r$-dim Lie conformal superalgebra $R$, we can associate a Lie superalgebra as follows.
Let Lie$R$ be the quotient
of the $\mathbb{Z}/2\mathbb{Z}$-graded vector space with basis $a_{\mathbf{n}}$ $(a\in R, \mathbf{n}\in\mathbb{Z}^r)$ by
the subspace spanned over $\mathbb{C}$ by
elements:
$$(\alpha a)_{\mathbf{n}}-\alpha a_{\mathbf{n}},~~(a+b)_{\mathbf{n}}-a_{\mathbf{n}}-b_{\mathbf{n}},~~(T_i
a)_{\mathbf{n}}+n_ia_{{\mathbf{n}}-e_i},~~~\text{where}~~a,~~b\in R,~~\alpha\in \mathbb{C},~~{\mathbf{n}}\in
\mathbb{Z}^r.$$
Here, if $a\in R_{\alpha}$, $a_{\mathbf{n}}\in $ $({\text{Lie} R})_{\alpha}$. The operation on Lie$R$ is defined as follows:
\begin{eqnarray}\label{106}
[a_{\mathbf{m}}, b_\mathbf{n}]=\sum_{\mathbf{j}\in \mathbb{Z}_{+}^r}\left(\begin{array}{ccc}
\mathbf{m}\\\mathbf{j}\end{array}\right)(a_{(\mathbf{j})}b)_{\mathbf{m}+\mathbf{n}-\mathbf{j}},\end{eqnarray}
where $\left(\begin{array}{ccc}
\mathbf{m}\\\mathbf{j}\end{array}\right)=\left(\begin{array}{ccc}
m_1\\j_1\end{array}\right)\cdots\left(\begin{array}{ccc}
m_r\\j_r\end{array}\right)$, if $\mathbf{m}=(m_1,\cdots,m_r)$ and $\mathbf{j}=(j_1,\cdots,j_r)$.

\begin{example}\label{a2}
Let $\mathfrak{g}$ be a Lie superalgebra. Then $\text{Cur} \mathfrak{g}=\mathbb{C}[\mathbf{T}]\otimes \mathfrak{g}$ can be endowed with the following $\mathbf{\lambda}$-bracket $(a, b\in \mathfrak{g})$:
$$[a_{\mathbf{\lambda}}b]=[a,b].$$
$\widetilde{\mathfrak{g}}=$Lie$(\text{Cur} \mathfrak{g})$ is just the current Lie superalgebra in $r$ variables, i.e. $\widetilde{\mathfrak{g}}=\mathfrak{g}\otimes_{\mathbb{C}}\mathbb{C}[x_1,x_1^{-1},\cdots,x_r,x_r^{-1}]$ with the following Lie bracket:
$$[a\otimes x_1^{m_1}\cdots x_r^{m_r}, b\otimes x_1^{n_1}\cdots x_r^{n_r}]=[a,b]\otimes x_1^{m_1+n_1}\cdots x_r^{m_r+n_r},$$
where $a$, $b\in \mathfrak{g}$, and $m_1$, $\cdots$, $m_r$, $n_1$, $\cdots$, $n_r\in \mathbb{Z}$.
\end{example}

\begin{example}\label{a3}
Let $R=\bigoplus_{i=1}^r\mathbb{C}[\mathbf{T}]L^i$. It can be endowed with the following $\mathbf{\lambda}$-bracket:
$$[L^i_{\mathbf{\lambda}}L^j]=T_iL^j+\lambda_iL^j+\lambda_jL^i.$$
Lie$R$ is just the Lie algebra $W_r=\text{Der} \mathbb{C}[x_1,x_1^{-1},\cdots,x_r,x_r^{-1}]$. For $r=1$, $R$ is the Virasoro Lie conformal algebra.
\end{example}

\begin{example}\label{aa4}
Let $r=2s$ and $R=\mathbb{C}[\mathbf{T}]L$. The following $\mathbf{\lambda}$-bracket endows $R$ a Lie conformal algebra structure:
$$[L_{\mathbf{\lambda}}L]=\sum_{i=1}^s(\lambda_{s+i}T_iL-\lambda_iT_{s+i}L).$$
Lie$R$ is just the subalgebra $H_r$ of Hamiltonian derivations of $W_r$.
\end{example}

In the following of this paper, we mainly study a special class of $r$-dim Lie conformal superalgebras.

Let $\overline{\lambda}_i=(\lambda_1,\cdots,\lambda_{i-1},\lambda_{i+1},\cdots,\lambda_{r})$ and $\overline{\mathbf{T}}_i=(T_1,\cdots,T_{i-1}, T_{i+1},\cdots,T_{r})$ for $i\in\{1,\cdots, r\}$.

\begin{definition}\label{def1}
Suppose that $R$ is a free $\mathbb{C}[\mathbf{T}]$-module over a $\mathbb{Z}/2\mathbb{Z}$-graded vector space $V$. $R$ is called an \emph{$r$-dim $i$-linear Lie conformal superalgebra} if for any $a$, $b\in V$, its $\mathbf{\lambda}$-bracket is of the following form:
\begin{eqnarray}\label{b1}
[a_{\mathbf{\lambda}}b]=T_if(\overline{\lambda}_i,\overline{\mathbf{T}}_i)u+\lambda_ig(\overline{\lambda}_i,\overline{\mathbf{T}}_i)v+h(\overline{\lambda}_i,\overline{\mathbf{T}}_i)w,\end{eqnarray}
where $u$, $v$, $w\in V$, $f(\overline{\lambda}_i,\overline{\mathbf{T}}_i)$, $g(\overline{\lambda}_i,\overline{\mathbf{T}}_i)$, $h(\overline{\lambda}_i,\overline{\mathbf{T}}_i)\in \mathbb{C}[\lambda_1,\cdots, \lambda_{i-1}, \lambda_{i+1},\cdots, \lambda_r,$ $ T_1,\cdots,T_{i-1},T_{i+1},\cdots,T_{r}]$, $i\in\{1,\cdots,r\}$. In special,
$R$ is called an \emph{$r$-dim linear Lie conformal superalgebra} if for any $a$, $b\in V$, its $\mathbf{\lambda}$-bracket is of the following form:
\begin{eqnarray}\label{a4}
[a_{\mathbf{\lambda}}b]=\sum_{i=1}^r\lambda_iw_i+\sum_{j=1}^rT_ju_j+v,~~~~~u_i,~w_i,~~v\in V.
\end{eqnarray}
When $r=1$, for simplicity, it is also called \emph{linear Lie conformal superalgebra}.
\end{definition}

\begin{remark}
Since the right side of (\ref{b1}) can be seen a linear function with two variables $\lambda_i$, $T_i$, we take the name ``$r$-dim $i$-linear Lie conformal superalgebra". Similarly, since the right side of (\ref{a4}) can be seen a linear function with variables $\lambda_1$, $\cdots$, $\lambda_r$, $T_1$, $\cdots$, $T_r$, we call it ``$r$-dim linear Lie conformal superalgebra". When $r=1$, it is called ``quadratic Lie conformal superalgebra" in \cite{Xu1}, since the definition of Lie conformal superalgebra is given by the language of formal distribution (see \cite{K1}).
\end{remark}

\begin{remark}
Obviously, Examples \ref{a2} and \ref{a3} are $r$-dim linear Lie conformal superalgebras.
Example \ref{aa4} is an $r$-dim $r$-linear Lie conformal superalgebra.
\end{remark}

Next, we introduce the definitions of Novikov superalgebra and Novikov-Poisson superalgebra.
\begin{definition}
A (left) \emph{Novikov superalgebra} is a $\mathbb{Z}/2\mathbb{Z}$-graded vector space $A=A_{\overline{0}}\oplus A_{\overline{1}}$ with an operation
``$\circ$" satisfying the following axioms: for $a\in A_{\alpha}$, $b\in A_{\beta}$, $c\in A_{\gamma}$,
\begin{eqnarray*}
&(a\circ b)\circ c=(-1)^{\beta\gamma}(a\circ c)\circ b,\\
&(a\circ b)\circ c-a\circ (b\circ c)=(-1)^{\alpha\beta}[(b\circ a)\circ c-b\circ(a\circ c)].
\end{eqnarray*}
When $A_{\overline{1}}=0$, we call $A$ a (left) \emph{Novikov algebra}.

A \emph{Novikov-Poisson superalgebra} is a $\mathbb{Z}/2\mathbb{Z}$-graded vector space $A=A_{\overline{0}}\oplus A_{\overline{1}}$ with two operations
``$\circ$" and ``$\cdot$", where $(A,\circ)$ is a Novikov superalgebra, $(A,\cdot)$ is a commutative associative superalgebra, and they satisfy the following axioms:
\begin{eqnarray*}
(a\circ b)\cdot c-a\circ(b\cdot c)=(-1)^{\alpha\beta}((b\circ a)\cdot c-b\circ(a\cdot c)),\\
(a\cdot b)\circ c=a\cdot(b\circ c),
\end{eqnarray*}
for $a\in A_{\alpha}$, $b\in A_{\beta}$. When $A_{\overline{1}}=\{0\}$, it is called \emph{Novikov-Poisson algebra}.
\end{definition}

\begin{remark}
Novikov algebra was essentially stated in \cite{GD} that it corresponds to a
certain Hamiltonian operator. Such an algebraic structure appeared
in \cite{BN} from the point of view of Poisson structures of
hydrodynamic type. The name ``Novikov algebra" was given by Osborn
\cite{Os}. Moreover, Novikov-Poisson algebra is introduced by Xu in \cite{X2} to study Novikov algebra, and the author also studied the constructions of Novikov-Poisson algebra in \cite{X3}.
\end{remark}

\begin{definition}
An \emph{$r$-dim right Novikov conformal superalgebra} is a left $\mathbb{Z}/2\mathbb{Z}$-graded $\mathbb{C}[\mathbf{T}]$-module $R=R_{\overline{0}}\oplus R_{\overline{1}}$ endowed with a $\mathbb{\lambda}$-product $a_{\mathbb{\lambda}}b$ which defines a $\mathbb{C}$-bilinear map $R\otimes R\rightarrow R[\mathbf{\lambda}]$, where $R[\mathbf{\lambda}]=\mathbb{C}[\mathbf{\lambda}]\otimes R=\mathbb{C}[\lambda_1,\cdots, \lambda_r]\otimes R$, subject to the following axioms:
\begin{eqnarray}
&&(T_ia_{\mathbf{\lambda}}b)=-\lambda_i(a_{\mathbf{\lambda}}b),\qquad (a_{\mathbf{\lambda}}T_ib)=(T_i+\lambda_i)(a_{\mathbf{\lambda}}b),\\
&&a_{\mathbf{\lambda}}(b_{\mathbf{\mu}}c)-(a_{\mathbf{\lambda}}b)_{\mathbf{\lambda}+\mathbf{\mu}}c=(-1)^{\beta\gamma}(a_{\mathbf{\lambda}}(c_{-\mathbf{\mu}-\mathbf{T}}b)-(a_{\mathbf{\lambda}}c)_{-\mathbf{\mu}-\mathbf{T}}b),\\ &&a_{\mathbf{\lambda}}(b_{\mathbf{\mu}}c)=(-1)^{\alpha\beta}b_{\mathbf{\mu}}(a_{\mathbf{\lambda}}c).
\end{eqnarray}
for $a\in R_{\alpha}$, $b\in R_{\beta}$ and $c\in R_{\gamma}$. When $R_{\overline{1}}=\{0\}$, $R$ is called \emph{$r$-dim right Novikov conformal algebra}. If $r=1$, it is called \emph{right Novikov conformal superalgebra}.

An \emph{$r$-dim (left) Novikov conformal superalgebra} is a left $\mathbb{Z}/2\mathbb{Z}$-graded $\mathbb{C}[\mathbf{T}]$-module $R=R_{\overline{0}}\oplus R_{\overline{1}}$ endowed with a $\mathbb{\lambda}$-product $a_{\mathbb{\lambda}}b$ which defines a $\mathbb{C}$-bilinear map $R\otimes R\rightarrow R[\mathbf{\lambda}]$, where $R[\mathbf{\lambda}]=\mathbb{C}[\mathbf{\lambda}]\otimes R=\mathbb{C}[\lambda_1,\cdots, \lambda_r]\otimes R$, subject to the following axioms:
\begin{eqnarray}
&&(T_ia_{\mathbf{\lambda}}b)=-\lambda_i(a_{\mathbf{\lambda}}b),\qquad (a_{\mathbf{\lambda}}T_ib)=(T_i+\lambda_i)(a_{\mathbf{\lambda}}b),\\
&&(a_\mathbf{\lambda}b)_{\mathbf{\lambda}+\mathbf{\mu}}c-a_{\mathbf{\lambda}}(b_{\mathbf{\mu}}c)=(-1)^{\alpha\beta}((b_{\mathbf{\mu}}a)_{\mathbf{\lambda}+\mathbf{\mu}}c-b_{\mathbf{\mu}}(a_{\mathbf{\lambda}}c)),\\
&&(a_{\mathbf{\lambda}}b)_{\mathbf{\lambda}+\mathbf{\mu}}c=(-1)^{\beta\gamma}(a_{\mathbf{\lambda}}c)_{-\mathbf{\mu}-\mathbf{T}}b,
\end{eqnarray}
for $a\in R_{\alpha}$, $b\in R_{\beta}$ and $c\in R_{\gamma}$. When $R_{\overline{1}}=\{0\}$, $R$ is called \emph{$r$-dim (left) Novikov conformal algebra}. If $r=1$, it is called \emph {
(left) Novikov conformal superalgebra}.

\end{definition}
\begin{remark}\label{r1}
The definition of {(left) Novikov conformal algebra} is introduced in \cite{HL1}. Obviously, if $(R,\cdot_{\lambda}\cdot)$ is a left (resp. right) Novikov conformal superalgebra, then $(R, \circ_{\lambda})$ is a right (resp. left) Novikov conformal superalgebra with
$a\circ_{\lambda}b=(-1)^{\alpha\beta}b_{-\lambda-T}a$ for any $a\in R_{\alpha}$, $b\in R_{\beta}$.
\end{remark}

\begin{example}\label{ex1}
If $(A,\circ)$ is a Novkiov superalgebra, then $R=\mathbb{C}[T]A$ is a Novikov conformal superalgebra with the following $\lambda$-product:
\begin{eqnarray*}
a_{\lambda}b=a\cdot b,~~~~a,~~b\in A.
\end{eqnarray*}

Let $R=R_{\overline{0}}\oplus R_{\overline{1}}=\mathbb{C}[T]a\oplus\mathbb{C}[T]b$ be a free $\mathbb{C}[T]$-module generated by $a$, $b$. Then, $R$ is a Novikov conformal superalgebra with the following $\lambda$-product:
\begin{eqnarray*}
a_{\lambda}a=(\lambda+T+C_1)a,~~~a_{\lambda}b=(\lambda+T+C_1)b,~~~b_{\lambda}a=(\lambda+T+C_2)b,~~b_{\lambda}b=0,
\end{eqnarray*}
where $C_1$, $C_2\in \mathbb{C}$.
\end{example}

Next, we present a theorem about the classification of torsion-free Novikov conformal algebras of rank 1 in \cite{HL1}.
\begin{proposition}\label{pp3}
Let $R=\mathbb{C}[T]x$ be a Novikov conformal algebra
that is free of rank one as a $\mathbb{C}[T]$-module. Then
$R$ is isomorphic to the Novikov conformal algebra with $\lambda$-product which is one of three cases as follows:\\
(i)\quad$x_{\lambda}x=0;$\\
(ii)\quad $x_{\lambda}x=x,~~~~i.e.~~\text{R~~is~~associative};$\\
(iii)\quad$x_{\lambda}x=(\lambda+T+a)x,~~ \text{for~~ any}~~ a\in
\mathbb{C}, ~~\text{denoted ~~by }~~V_a.$
\end{proposition}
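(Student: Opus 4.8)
The plan is to exploit that freeness of rank one collapses the whole $\lambda$-product to a single polynomial. By the sesquilinearity axioms $(Ta_{\lambda}b)=-\lambda(a_{\lambda}b)$ and $(a_{\lambda}Tb)=(T+\lambda)(a_{\lambda}b)$, for all $f,g\in\mathbb{C}[T]$ one has $(f(T)x)_{\lambda}(g(T)x)=f(-\lambda)g(T+\lambda)(x_{\lambda}x)$, so a (left) Novikov conformal algebra structure on $R=\mathbb{C}[T]x$ is the same datum as a choice of $x_{\lambda}x=P(\lambda,T)x$ with $P\in\mathbb{C}[\lambda,T]$; and any $\mathbb{C}[T]$-module isomorphism between two such structures sends $x\mapsto\gamma x'$ with $\gamma\in\mathbb{C}^{*}$ and rescales $P\mapsto\gamma^{-1}P$. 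So it suffices to determine the admissible $P$ up to a nonzero scalar. First I would feed $a=b=c=x$ into the two nontrivial Novikov conformal algebra axioms and rewrite each term by sesquilinearity, obtaining the two polynomial identities
\begin{eqnarray*}
&&P(\lambda,-\lambda-\mu)P(\lambda+\mu,T)=P(\lambda,\mu+T)P(-\mu-T,T),\\
&&P(\lambda,-\lambda-\mu)P(\lambda+\mu,T)-P(\mu,T+\lambda)P(\lambda,T)=P(\mu,-\lambda-\mu)P(\lambda+\mu,T)-P(\lambda,T+\mu)P(\mu,T)
\end{eqnarray*}
in $\mathbb{C}[\lambda,\mu,T]$; the problem then reduces to classifying their common polynomial solutions.

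If $P=0$ this is case (i), so assume $P\neq0$. Putting $\mu=0$ in the first identity and cancelling the factor $P(\lambda,T)$ (legitimate, as $\mathbb{C}[\lambda,T]$ is a domain) gives $P(\lambda,-\lambda)=P(-T,T)$; since the left side involves only $\lambda$ and the right only $T$, both equal a single constant $e$, so $(\lambda+T)\mid(P-e)$. Next, putting $\mu=0$ in the second identity, cancelling $P(\lambda,T)$ and using $P(\lambda,-\lambda)=e$, I would get $q(T+\lambda)-q(T)=e-q(-\lambda)$ where $q(s):=P(0,s)$; comparing degrees in $T$ forces $\deg q\le1$, hence $P(0,T)=\alpha T+e$ for some $\alpha\in\mathbb{C}$.

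The key step will be bounding $\deg P=:n$. Let $P_{n}$ be the top homogeneous component of $P$; equating the parts of top total degree in the first identity shows $P_{n}$ obeys that same identity. Setting $\mu=0$ in it and cancelling $P_{n}(\lambda,T)$ gives $P_{n}(\lambda,-\lambda)=P_{n}(-T,T)$, two homogeneous polynomials of degree $n$ in disjoint variables, hence both zero once $n\ge1$, and homogeneity then forces $(\lambda+T)\mid P_{n}$. Writing $P_{n}=(\lambda+T)R$ and substituting back, the common factor $-\mu(\lambda+\mu+T)$ cancels and $R$ again obeys the same homogeneous identity, so the argument repeats; iterating, $P_{n}=c(\lambda+T)^{n}$ with $c\neq0$. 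But then the coefficient of $T^{n}$ in $P(0,T)$ equals $c\neq0$, contradicting $P(0,T)=\alpha T+e$ as soon as $n\ge2$. Hence $\deg P\le1$, so $P=c\lambda+dT+e$; substituting this affine form into the first identity and comparing coefficients forces $d=c$ and imposes nothing further, after which the second identity holds automatically. Thus $P=c(\lambda+T)+e$.

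Finally I would read off the normal forms using $P\mapsto\gamma^{-1}P$: if $c=e=0$ then $P=0$ (case (i)); if $c=0\neq e$, rescaling gives $P=1$, i.e. $x_{\lambda}x=x$, which is case (ii) (it is $\mathrm{Cur}$ of the one-dimensional Novikov algebra with $x\circ x=x$ and is in particular associative); if $c\neq0$, rescaling by $\gamma=c$ gives $P=\lambda+T+a$ with $a=e/c\in\mathbb{C}$, which is $V_{a}$ (case (iii)). Conversely, for each of these three $P$ the two displayed identities hold, so each case genuinely occurs and the classification is complete. I expect the degree bound of the third paragraph to be the main obstacle: $P$ can a priori have arbitrarily high degree in $T$, and excluding this needs the leading-term iteration rather than any single evaluation of the axioms.
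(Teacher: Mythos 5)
Your argument is correct and complete. Note that the paper does not actually prove Proposition \ref{pp3}: it only imports the statement from the preprint \cite{HL1}, so there is no in-text proof to compare against; your write-up is a self-contained substitute. The reductions all check out: sesquilinearity does collapse the structure to a single $P\in\mathbb{C}[\lambda,T]$ with isomorphisms acting by nonzero scalars; the two displayed polynomial identities are exactly the rank-one specializations of the \emph{left} Novikov conformal axioms (the right choice of convention here, since $V_a$ fails the right-Novikov axioms); setting $\mu=0$ legitimately yields $P(\lambda,-\lambda)=P(-T,T)=e$ and $\deg P(0,T)\le 1$; and the leading-term iteration correctly forces $P_n=c(\lambda+T)^n$ (the top homogeneous components genuinely multiply because the linear substitutions are surjective onto $\mathbb{C}^2$, so no accidental vanishing occurs), which contradicts $\deg_T P(0,T)\le 1$ once $n\ge 2$. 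The final coefficient comparison for $P=c\lambda+dT+e$ indeed gives only $c=d$, and with $P=c(\lambda+T)+e$ both identities hold, so the three normal forms (i)--(iii) are exactly the orbits under rescaling. The one place a reader might pause is the substitution $\nu=-\mu-T$ inside $(x_\lambda x)_{-\mu-T}x$; since $R$ is free of rank one this is just commutative polynomial substitution, so your formula $P(\lambda,\mu+T)P(-\mu-T,T)$ is right, but it would be worth one sentence to justify it.
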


Next, we also introduce a construction of  Novkiov conformal superalgebras from Novkov-Poisson superalgebras (see \cite{HL1}).
\begin{proposition}\label{pp2}
If $(V, \circ, \cdot)$ is a Novikov-Poisson superalgebra, then $R=\mathbb{C}[T]V$ is a Novikov conformal superalgebra with the following $\lambda$-product:
\begin{eqnarray}
a_{\lambda}b=(\lambda+T)(a\cdot b)+a\circ b, ~~~~a\in V_{\alpha},~~b\in V_{\beta}.
\end{eqnarray}
\end{proposition}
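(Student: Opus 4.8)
The plan is to verify directly that the $\lambda$-product $a_\lambda b=(\lambda+T)(a\cdot b)+a\circ b$ on $R=\mathbb{C}[T]V$ satisfies the three axioms in the definition of a (one-dimensional) left Novikov conformal superalgebra. The $\mathbb{C}[T]$-sesquilinearity axioms $(Ta_\lambda b)=-\lambda(a_\lambda b)$ and $(a_\lambda Tb)=(T+\lambda)(a_\lambda b)$ are built into the formula: they hold on generators because $\lambda$ is multiplied through and $T$ acts by multiplication in $\mathbb{C}[T]V$, and then extend $\mathbb{C}[T]$-bilinearly to all of $R$. So the content is in the remaining two identities, which I would check on generators $a\in V_\alpha$, $b\in V_\beta$, $c\in V_\gamma$ and then invoke sesquilinearity to conclude they hold on all of $R$.

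First I would expand the left-symmetry-type axiom
\[
(a_\lambda b)_{\lambda+\mu}c-a_\lambda(b_\mu c)=(-1)^{\alpha\beta}\bigl((b_\mu a)_{\lambda+\mu}c-b_\mu(a_\lambda c)\bigr).
\]
Substituting the formula, $(a_\lambda b)_{\lambda+\mu}c=(\lambda+\mu+T)\bigl((a\cdot b)\cdot c\bigr)+\bigl((\lambda+\mu)(a\cdot b)+a\circ b\bigr)\circ c$, being careful that in the conformal product $(\,\cdot\,)_{\lambda+\mu}\,\cdot\,$ the variable $T$ to the left gets replaced by $-(\lambda+\mu)-T$ under sesquilinearity when it sits inside the first slot — concretely, $((\lambda+\mu)(a\cdot b))\circ c$ must be read using $(T x_\nu y)=-\nu(x_\nu y)$, so the $(\lambda+\mu)$ factor coming from the outer $(\lambda+\mu+T)$ of $a\cdot b$ contributes with the right sign. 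After expanding $a_\lambda(b_\mu c)=(\lambda+T)\bigl(a\cdot((\mu+T)(b\cdot c)+b\circ c)\bigr)+a\circ((\mu+T)(b\cdot c)+b\circ c)$ and collecting terms by their polynomial degree in $\lambda$ and $\mu$, the identity should separate into: the coefficient of the top-degree monomial, which reduces to commutativity and associativity of $\cdot$; mixed terms, which reduce to the compatibility axiom $(a\cdot b)\circ c=a\cdot(b\circ c)$; and the constant term, which reduces to the left-symmetry axiom of the Novikov product $\circ$ together with the first Novikov--Poisson compatibility $(a\circ b)\cdot c-a\circ(b\cdot c)=(-1)^{\alpha\beta}((b\circ a)\cdot c-b\circ(a\cdot c))$. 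Each piece matches one of the defining axioms of a Novikov--Poisson superalgebra.

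For the third axiom $(a_\lambda b)_{\lambda+\mu}c=(-1)^{\beta\gamma}(a_\lambda c)_{-\mu-T}b$, I would again substitute and use sesquilinearity to move $T$ past the outer product: the left side gives $(\lambda+\mu+T)((a\cdot b)\cdot c)+((\lambda+\mu)(a\cdot b)+a\circ b)\circ c$, while the right side, after replacing $\mu\mapsto -\mu-T$ and tracking how $T$ to the left of $b$ acts, gives $(-1)^{\beta\gamma}$ times an expression that, upon using commutativity of $\cdot$ and the Novikov right-symmetry $(x\circ y)\circ z=(-1)^{\beta\gamma}(x\circ z)\circ y$ plus the compatibility $(x\cdot y)\circ z=x\cdot(y\circ z)$, collapses to the left side. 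Degree-by-degree matching in $\lambda,\mu$ is what makes the bookkeeping transparent.

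The main obstacle I anticipate is purely in the sign and substitution bookkeeping: the conformal products $(\,\cdot\,)_{\lambda+\mu}\,\cdot\,$ and $(\,\cdot\,)_{-\mu-T}\,\cdot\,$ force one to commute polynomial factors of $T$ through the $\lambda$-product via sesquilinearity, and each such move can flip a sign or shift a spectral variable; combined with the Koszul signs $(-1)^{\alpha\beta}$, $(-1)^{\beta\gamma}$ from the superalgebra structure, it is easy to misplace a term. The right way to control this is to verify everything first in the non-super case $V_{\overline 1}=\{0\}$ (where it is essentially the known rank-considerations behind Proposition \ref{pp3} and the classical correspondence between Novikov--Poisson algebras and quadratic/linear conformal algebras), and then reinstate the signs by the standard rule that each transposition of adjacent homogeneous elements of parities $\alpha,\beta$ introduces $(-1)^{\alpha\beta}$; once the skeleton of the computation is fixed, the super case follows by this mechanical sign insertion with no new ideas.
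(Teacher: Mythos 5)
Your direct verification is the right approach, and in fact it is the only one available here: the paper does not prove Proposition \ref{pp2} at all, but imports it from \cite{HL1}, so there is no in-paper argument to compare against. Your plan --- expand $a_\lambda b=(\lambda+T)(a\cdot b)+a\circ b$ in the two non-sesquilinearity axioms, collect coefficients of the monomials in $\lambda,\mu,T$, and match each coefficient against a Novikov--Poisson axiom --- does go through; I checked the degree-by-degree matching and each piece closes up. Two points of bookkeeping are worth flagging. First, your displayed expansion of $(a_\lambda b)_{\lambda+\mu}c$ is wrong as written: applying $(Tx)_{\nu}y=-\nu(x_{\nu}y)$ with $\nu=\lambda+\mu$ turns the outer factor $(\lambda+T)$ into $\lambda-(\lambda+\mu)=-\mu$, not $(\lambda+\mu)$; you acknowledge the substitution issue in words, but the final computation must use $-\mu$. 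Second, the right-commutativity axiom $(a_\lambda b)_{\lambda+\mu}c=(-1)^{\beta\gamma}(a_\lambda c)_{-\mu-T}b$ forces you to verify $(a\circ b)\cdot c=(-1)^{\beta\gamma}(a\cdot c)\circ b$ and $(a\cdot b)\circ c=(-1)^{\beta\gamma}(a\circ c)\cdot b$, neither of which is literally a listed Novikov--Poisson axiom; both follow by combining $(x\cdot y)\circ z=x\cdot(y\circ z)$ with supercommutativity of $\cdot$, e.g. $(a\circ b)\cdot c=(-1)^{(\alpha+\beta)\gamma}c\cdot(a\circ b)=(-1)^{(\alpha+\beta)\gamma}(c\cdot a)\circ b=(-1)^{\beta\gamma}(a\cdot c)\circ b$. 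Also, your attribution of axioms to degrees in the left-symmetry identity is slightly off (the constant term needs only the left-symmetry of $\circ$; the linear-in-$(\lambda+\mu+T)$ terms need both compatibility conditions), but that is cosmetic. With these corrections the proof is complete.
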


In addition, it is easy to check the following proposition.
\begin{proposition}\label{ppp1}
If $(R,\cdot_{\lambda}\cdot)$ is an $r$-dim left (resp. right) Novikov conformal superalgebra, then
$(R, [\cdot_{\lambda}\cdot])$ is an $r$-dim Lie conformal superalgebra  with the following $\lambda$-bracket:
\begin{eqnarray*}
[a_{\lambda} b]=a_{\lambda}b-(-1)^{\alpha\beta}(b_{-\lambda-\mathbf{T}}a),~~~~a\in R_{\alpha},~~b\in R_{\beta}.
\end{eqnarray*}
\end{proposition}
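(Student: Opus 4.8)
The plan is to verify directly that the proposed $\lambda$-bracket
$[a_{\lambda}b]=a_{\lambda}b-(-1)^{\alpha\beta}(b_{-\lambda-\mathbf{T}}a)$
satisfies the three axioms \eqref{a6}, \eqref{a7}, \eqref{a8} defining an $r$-dim Lie conformal superalgebra. The most economical route is to first observe that the statement can be reduced to the left case: if $(R,\cdot_\lambda\cdot)$ is right Novikov conformal, then by Remark \ref{r1} the product $a\circ_\lambda b=(-1)^{\alpha\beta}b_{-\lambda-\mathbf{T}}a$ makes $R$ a left Novikov conformal superalgebra, and one checks that the antisymmetrized bracket built from $\circ_\lambda$ coincides (up to sign bookkeeping) with the one built from $\cdot_\lambda$. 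So it suffices to treat $(R,\cdot_\lambda\cdot)$ left Novikov conformal.

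First I would check \eqref{a6}. Using the sesquilinearity axioms $(T_ia_{\mathbf{\lambda}}b)=-\lambda_i(a_{\mathbf{\lambda}}b)$ and $(a_{\mathbf{\lambda}}T_ib)=(T_i+\lambda_i)(a_{\mathbf{\lambda}}b)$ for the Novikov $\lambda$-product, together with the formal substitution $\mathbf{\lambda}\mapsto-\mathbf{\lambda}-\mathbf{T}$ in the second term (and the convention that $\mathbf{T}$ then acts on whatever sits to the left), a short computation gives $[T_ia_{\mathbf{\lambda}}b]=-\lambda_i[a_{\mathbf{\lambda}}b]$ and $[a_{\mathbf{\lambda}}T_ib]=(T_i+\lambda_i)[a_{\mathbf{\lambda}}b]$; this is the standard "conformal sesquilinearity" bookkeeping and is purely formal. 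Next, skew-symmetry \eqref{a7} is essentially immediate from the definition: swapping $a$ and $b$, replacing $\mathbf{\lambda}$ by $-\mathbf{\lambda}-\mathbf{T}$, and using $(-1)^{\alpha\beta}=(-1)^{\beta\alpha}$ together with the double-substitution identity $(-\mathbf{\lambda}-\mathbf{T})\mapsto-(-\mathbf{\lambda}-\mathbf{T})-\mathbf{T}=\mathbf{\lambda}$ returns $-(-1)^{\alpha\beta}[b_{-(\mathbf{\lambda}+\mathbf{T})}a]$. Care is needed only to track which $\mathbf{T}$ acts on which factor, but no algebraic input beyond the sesquilinearity relations is used here.

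The main work, and the expected main obstacle, is the Jacobi identity \eqref{a8}. Expanding $[a_{\mathbf{\lambda}}[b_{\mathbf{\mu}}c]]$, $[[a_{\mathbf{\lambda}}b]_{\mathbf{\lambda}+\mathbf{\mu}}c]$ and $(-1)^{\alpha\beta}[b_{\mathbf{\mu}}[a_{\mathbf{\lambda}}c]]$ using the definition produces eight terms on the left-hand side and a matching set on the right; one regroups them and repeatedly applies the two nontrivial Novikov conformal axioms — the (quasi-)associativity relation $(a_\mathbf{\lambda}b)_{\mathbf{\lambda}+\mathbf{\mu}}c-a_{\mathbf{\lambda}}(b_{\mathbf{\mu}}c)=(-1)^{\alpha\beta}((b_{\mathbf{\mu}}a)_{\mathbf{\lambda}+\mathbf{\mu}}c-b_{\mathbf{\mu}}(a_{\mathbf{\lambda}}c))$ and the commutativity relation $(a_{\mathbf{\lambda}}b)_{\mathbf{\lambda}+\mathbf{\mu}}c=(-1)^{\beta\gamma}(a_{\mathbf{\lambda}}c)_{-\mathbf{\mu}-\mathbf{T}}b$ — plus the sesquilinearity relations to move $\mathbf{T}$'s through. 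The known $r=1$, non-super case (Proposition \ref{ppp1} for $r=1$, ungraded, is essentially the classical fact that a Novikov conformal algebra gives a Lie conformal algebra) guarantees the identity holds formally; the only genuine difficulty is verifying that every Koszul sign $(-1)^{\alpha\beta}$, $(-1)^{\beta\gamma}$, $(-1)^{\alpha\gamma}$ introduced by the graded swaps and by the substitutions $\mathbf{\mu}\mapsto-\mathbf{\mu}-\mathbf{T}$ lands correctly so that the would-be obstruction cancels term by term. I would organize this by splitting the verification according to which of $a,b,c$ gets moved past which, reducing to a finite bookkeeping that mirrors the structure of the two Novikov axioms, and conclude that \eqref{a8} holds. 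The right-handed case then follows from the reduction in the first paragraph.
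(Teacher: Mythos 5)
Your proposal is correct; the paper offers no argument for this proposition at all (it merely says ``it is easy to check''), and your direct verification of the axioms \eqref{a6}--\eqref{a8} --- sesquilinearity, skew-symmetry via the double substitution $-(-\mathbf{\lambda}-\mathbf{T})-\mathbf{T}=\mathbf{\lambda}$, and the Jacobi identity via the conformal left-symmetry axiom, with the right-handed case reduced to the left-handed one through Remark \ref{r1} (noting the two brackets differ only by an overall sign) --- is exactly the intended computation. One simplification worth recording: the Jacobi identity for the commutator bracket needs only the left-symmetry axiom, i.e.\ the symmetry of the conformal associator $(a_{\mathbf{\lambda}}b)_{\mathbf{\lambda}+\mathbf{\mu}}c-a_{\mathbf{\lambda}}(b_{\mathbf{\mu}}c)$ in $a$ and $b$, and not the Novikov right-commutativity axiom, so the step you flag as the main obstacle is the standard fact that the commutator of a left-symmetric (conformal) superalgebra satisfies Jacobi, with only the Koszul signs and the $\mathbf{T}$-substitutions left to track.
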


\begin{definition}(see \cite{Xu1})
A \emph{super Gel'fand-Dorfman bialgebra} is a $\mathbb{Z}/2\mathbb{Z}$-graded vector space $\mathfrak{A}=\mathfrak{A}_{\overline{0}}\oplus
\mathfrak{A}_{\overline{1}}$ with two algebraic operations $[\cdot,\cdot]$ and $\circ$ such that $(\mathfrak{A},[\cdot,\cdot])$ forms a Lie superalgebra, $(\mathfrak{A},\circ)$ forms a Novikov superalgebra and the following compatibility condition holds:
\begin{eqnarray}\label{xx1}
[a\circ b, c]+[a,b]\circ c-a\circ [b,c]-(-1)^{\beta\gamma}[a\circ c, b]\end{eqnarray}
$$-(-1)^{\beta\gamma}[a,c]\circ b=0,$$
for $a\in \mathfrak{A}$, $b\in \mathfrak{A}_{\beta}$, and $c\in \mathfrak{A}_{\gamma}$. When $\mathfrak{A}_{\overline{1}}=0$, it is called \emph{Gel'fand-Dorfman bialgebra} (see also \cite{Xu1}).
\end{definition}

A characterization of linear Lie conformal superalgebra is as follows.
\begin{theorem}\label{TTa1}(see \cite{Xu1},\cite{GD})
A linear Lie conformal superalgebra $R=\mathbb{C}[T]V$ is equivalent to that $V$ is a super Gel'fand-Dorfman bialgebra.
\end{theorem}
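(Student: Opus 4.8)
The plan is to prove the equivalence by constructing mutually inverse passages between $\lambda$-brackets on $R=\mathbb{C}[T]V$ of the form (\ref{a4}) and pairs $(\circ,[\cdot,\cdot])$ of operations on $V$, and then matching the axioms on the two sides coefficient by coefficient.

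Starting from a linear Lie conformal superalgebra $R=\mathbb{C}[T]V$, I would first invoke the sesquilinearity axiom (\ref{a6}) to reduce everything to $V$: the bracket is determined by the elements $[a_\lambda b]$ for $a,b\in V$, and by (\ref{a4}) with $r=1$ these have the shape $[a_\lambda b]=T\,u(a,b)+\lambda\,w(a,b)+v(a,b)$ for $\mathbb{C}$-bilinear grading-preserving maps $u,w,v\colon V\times V\to V$. Substituting this into the skew-symmetry axiom (\ref{a7}) — recalling that in $[b_{-\lambda-T}a]$ the substitution $\mu\mapsto-\lambda-T$ affects only the monomial that carried $\mu$, the ``internal'' $T$'s being left alone — and comparing the coefficients of $T$, $\lambda$ and $1$, one obtains that $v$ is super skew-symmetric, $w$ is supersymmetric, and $w(a,b)=u(a,b)+(-1)^{\alpha\beta}u(b,a)$ for $a\in V_\alpha$, $b\in V_\beta$. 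Hence the bracket is encoded in the two operations $a\circ b:=u(a,b)$ and $[a,b]:=v(a,b)$ through
\[
[a_\lambda b]=T(a\circ b)+\lambda\bigl(a\circ b+(-1)^{\alpha\beta}b\circ a\bigr)+[a,b],
\]
and, conversely, for \emph{any} bilinear $\circ$ on $V$ and any super skew-symmetric $[\cdot,\cdot]$ on $V$ this formula defines (after extension by (\ref{a6})) a bracket on $R$ satisfying (\ref{a6}) and (\ref{a7}). So the entire content of the theorem sits in the Jacobi identity.

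The core step is therefore to expand the Jacobi identity (\ref{a8}) with the displayed formula. After pushing the operators $T$ through the outer brackets by (\ref{a6}), both sides of (\ref{a8}) become elements of $\mathbb{C}[T,\lambda,\mu]\otimes V$ of low degree, and I would equate the coefficients of the monomials $T^k\lambda^i\mu^j$. I expect the resulting finite list of identities to organize itself as follows: the coefficients of top degree in $\lambda,\mu$ involve only $\circ$ and collapse to the two (left) Novikov superalgebra axioms — left super-commutativity $(a\circ b)\circ c=(-1)^{\beta\gamma}(a\circ c)\circ b$ together with the super-symmetry of the associator of $\circ$ in its first two arguments; the coefficient of $T^0\lambda^0\mu^0$ is the super Jacobi identity for $[\cdot,\cdot]$; and each of the remaining mixed coefficients reduces, once the identities already obtained are taken into account, to exactly the compatibility condition (\ref{xx1}). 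This proves the forward implication. The converse is the same computation read backwards: assuming that $(V,[\cdot,\cdot],\circ)$ is a super Gel'fand--Dorfman bialgebra, one plugs its axioms into the coefficient identities and checks that they assemble into (\ref{a8}) for the bracket defined above. Since the two passages are manifestly inverse to one another, the correspondence is a bijection, which is the asserted equivalence.

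The main obstacle will be the bookkeeping in this Jacobi-identity expansion: carrying the sign factors $(-1)^{\alpha\beta}$, $(-1)^{\beta\gamma}$, $(-1)^{\alpha\gamma}$ correctly through the several nested expansions and, above all, verifying that the passage from (\ref{a8}) to its coefficients is \emph{lossless}, i.e. that it produces the two Novikov axioms, the Lie super Jacobi identity and (\ref{xx1}) with nothing missing and no superfluous relation left over. A prudent route is to settle the purely even case first (this is Gel'fand--Dorfman's original computation for Hamiltonian pairs) and then reinsert the Koszul signs uniformly. The known instances — the Virasoro conformal algebra (Example \ref{a3} with $r=1$), corresponding to $L\circ L=L$ and $[\cdot,\cdot]=0$, and the current construction (Example \ref{a2}), corresponding to $\circ=0$ — give convenient sanity checks on the normalization of the displayed formula.
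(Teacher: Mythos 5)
Your plan is essentially the paper's own method: the paper does not prove Theorem \ref{TTa1} itself (it cites \cite{Xu1} and \cite{GD}), but its proofs of the generalizations, Theorems \ref{TT1} and \ref{T1}, proceed exactly as you describe --- write the bracket in the form (\ref{bb1}), respectively (\ref{a5}), use sesquilinearity and skew-symmetry to express the $\lambda$-coefficient through the $T$-coefficient as in (\ref{bbb3}), then expand the Jacobi identity, compare coefficients of the monomials in $T,\lambda,\mu$, and show that the mixed-coefficient identities are consequences of the Novikov axioms, the Jacobi identity for $[\cdot,\cdot]$ and the single compatibility condition. The only normalization to correct before you compute: the product $u(a,b)$ read off as the coefficient of $T$ satisfies the \emph{right} Novikov superalgebra axioms (e.g.\ the top coefficient (\ref{c1}) gives super-commutativity of \emph{left} multiplications, not of right ones as you predict), and the left Novikov product of the super Gel'fand--Dorfman bialgebra in the sense of (\ref{xx1}) is $a\circ b=(-1)^{\alpha\beta}u(b,a)$ --- compare (\ref{aa1}) and the ordering $T_i(b\circ_i a)$ in (\ref{a5}) --- a flip your Virasoro sanity check cannot detect since that example is commutative.
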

\section{Characterization of $i$-linear Lie conformal superalgebra in $r$ dimension}
In this section, we will give characterizations of $r$-dim $i$-linear Lie conformal superalgebra and $r$-dim linear Lie conformal superalgebra
for $i\in\{1,\cdots,r\}$.

\begin{theorem}\label{TT1}
An $r$-dim $i$-linear Lie conformal superalgebra $R=\mathbb{C}[\mathbf{T}]V$ is equivalent to that $\overline{R}_i=\mathbb{C}[\overline{\mathbf{T}}_i]V$ is equipped an $(r-1)$-dim Novikov conformal superalgebra structure with the $\overline{\mathbf{\lambda}}_i$-product $a_{\overline{\mathbf{\lambda}}_i}b$, an $(r-1)$-dimensional Lie conformal superalgebra structure and they satisfy the following condition:
\begin{eqnarray}\label{ccc1}\qquad
[(a_{-\overline{\mathbf{\mu}}_i-\overline{\mathbf{T}}_i}b)_{-\overline{\mathbf{\lambda}}_i-\overline{\mathbf{T}}_i}c]
+[a_{-\overline{\mathbf{\mu}}_i-\overline{\mathbf{T}}_i}b]_{-\overline{\mathbf{\lambda}}_i-\overline{\mathbf{T}}_i}c
-a_{-\overline{\mathbf{\lambda}}_i-\overline{\mathbf{\mu}}_i-\overline{\mathbf{T}}_i}[b_{-\overline{\mathbf{\lambda}}_i-\overline{\mathbf{T}}_i}c]
\end{eqnarray}
$$-(-1)^{\beta\gamma}[(a_{-\overline{\mathbf{\lambda}}_i-\overline{\mathbf{T}}_i}c)_{-\overline{\mathbf{\mu}}_i-\overline{\mathbf{T}}_i}b]
-(-1)^{\beta\gamma}[a_{-\overline{\mathbf{\lambda}}_i-\overline{\mathbf{T}}_i}c]_{-\overline{\mathbf{\mu}}_i-\overline{\mathbf{T}}_i}b=0,$$
for $a\in V$, $b\in V_{\beta}$ , $c\in V_{\gamma}$. We call such $\overline{R}_i=\mathbb{C}[\overline{\mathbf{T}}_i]V$ an \emph{$(r-1)$-dim super Gel'fand-Dorfman conformal bialgebra}.
\end{theorem}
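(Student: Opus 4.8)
The plan is to work entirely on the generating space $V$: since $R=\mathbb{C}[\mathbf{T}]V$ is free over $\mathbb{C}[\mathbf{T}]$, axiom \eqref{a6} forces the $\mathbf{\lambda}$-bracket of arbitrary elements to be determined by its restriction to $V\times V$, and for $a,b\in V$ condition \eqref{b1} says exactly that $[a_{\mathbf{\lambda}}b]$, viewed inside $\mathbb{C}[\mathbf{\lambda}]\otimes\mathbb{C}[\mathbf{T}]\otimes V$, has degree at most one in each of $T_i,\lambda_i$ and carries no $T_i\lambda_i$ term. So I would first write
\[
[a_{\mathbf{\lambda}}b]=T_i\,(a\star_1 b)+\lambda_i\,(a\star_2 b)+(a\star_0 b),
\]
with $\mathbb{C}$-bilinear coefficient operations $a\star_j b\in\mathbb{C}[\overline{\lambda}_i]\otimes\overline{R}_i$, and feed this into the skew-symmetry axiom \eqref{a7}. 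The only delicate point is that the substitution $\mathbf{\lambda}\mapsto-\mathbf{\lambda}-\mathbf{T}$ acts in the $i$-th slot by $\lambda_i\mapsto-\lambda_i-T_i$ with $T_i$ now a module operator on the whole coefficient, so it feeds the $T_i$-part into the constant and $\lambda_i$-parts; comparing coefficients of $T_i$, $\lambda_i$ and $1$ on the two sides produces three relations. The one coming from the constant term says precisely that $[a_{\overline{\lambda}_i}b]:=a\star_0 b$ is conformally skew-symmetric in $r-1$ dimensions; the $\lambda_i$-relation expresses $a\star_2 b$ in terms of $a\star_1 b$ and of $b\star_1 a$ with $\overline{\lambda}_i$ replaced by $-\overline{\lambda}_i-\overline{\mathbf{T}}_i$; and after naming $a_{\overline{\lambda}_i}b:=a\star_1 b$ (possibly composed with the left/right twist of Remark \ref{r1}, so that it ends up being a genuine Novikov conformal product once \eqref{a8} is imposed) the three relations become equivalent to one closed formula of the shape
\[
[a_{\mathbf{\lambda}}b]=(T_i+\lambda_i)(a_{\overline{\lambda}_i}b)+(-1)^{\alpha\beta}\lambda_i\,(b_{-\overline{\lambda}_i-\overline{\mathbf{T}}_i}a)+[a_{\overline{\lambda}_i}b].
\]
Thus an $i$-linear bracket on $R$ is the same datum as one $\overline{\lambda}_i$-product together with one conformally skew operation on $\overline{R}_i$.

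The main step is then to substitute this closed formula into the Jacobi identity \eqref{a8} and organise the resulting identity by degree in $(T_i,\lambda_i,\mu_i)$, using \eqref{a6} repeatedly to move every inner $T_i$ to the outside (for instance $[a_{\mathbf{\lambda}}T_ix]=(T_i+\lambda_i)[a_{\mathbf{\lambda}}x]$ and $[(T_ix)_{\mathbf{\lambda}+\mathbf{\mu}}y]=-(\lambda_i+\mu_i)[x_{\mathbf{\lambda}+\mathbf{\mu}}y]$). I expect three families of equations. The component free of $T_i,\lambda_i,\mu_i$ is the Jacobi identity for $[\cdot_{\overline{\lambda}_i}\cdot]$, so $(\overline{R}_i,[\cdot_{\overline{\lambda}_i}\cdot])$ is an $(r-1)$-dim Lie conformal superalgebra; the remaining components, of positive degree in $\lambda_i$ or $\mu_i$, split --- once the first-step relations are used to normalise every occurrence of the product to the form used in \eqref{ccc1} --- into the two identities defining a Novikov conformal superalgebra structure on $(\overline{R}_i,a_{\overline{\lambda}_i}b)$ and the single compatibility condition \eqref{ccc1}. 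For the converse one reverses this: given $\overline{R}_i$ equipped with a Novikov conformal superalgebra structure, an $(r-1)$-dim Lie conformal superalgebra structure, and the compatibility \eqref{ccc1}, define $[a_{\mathbf{\lambda}}b]$ on $V\times V$ by the closed formula and extend it to $R$ via \eqref{a6} (this is well defined since $R$ is $\mathbb{C}[\mathbf{T}]$-free); then \eqref{a6} holds by construction, \eqref{a7} follows from the conformal skew-symmetry of $[\cdot_{\overline{\lambda}_i}\cdot]$ together with the Novikov conformal axioms, and \eqref{a8} is reassembled from the three families plus \eqref{ccc1}. The case $r=1$ recovers Theorem \ref{TTa1}, which serves as the template.

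The hard part is the bookkeeping in the Jacobi expansion: after plugging in the closed formula and clearing all inner $T_i$'s, one is left with a large number of terms carrying the Koszul signs $(-1)^{\alpha\beta}$, $(-1)^{\beta\gamma}$, $(-1)^{\alpha\gamma}$ and assorted argument substitutions among $\overline{\lambda}_i$, $\overline{\mu}_i$, $-\overline{\lambda}_i-\overline{\mathbf{T}}_i$ and so on, and the real work is to group them so that they collapse exactly onto the three stated identities with no residue --- in particular one must use the skew-symmetry relations both to put every occurrence of the product into the canonical form of \eqref{ccc1} and to check that the highest-degree-in-$(\lambda_i,\mu_i)$ part is equivalent to the two Novikov conformal axioms and nothing stronger. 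I would first run the entire computation in the purely even case, where it is the $r$-variable analogue of the Gel'fand-Dorfman correspondence underlying Theorem \ref{TTa1}, and only then reinstate the sign factors, which affect only the transposition coefficients.
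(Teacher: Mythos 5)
Your proposal is correct and follows essentially the same route as the paper: decompose the $i$-linear bracket into its $T_i$-, $\lambda_i$- and constant coefficients, use freeness and skew-symmetry to eliminate the middle operation in favour of the Novikov product (up to the left/right twist of Remark \ref{r1}), then expand the Jacobi identity and compare coefficients of the monomials in $T_i,\lambda_i,\mu_i$ to recover the $(r-1)$-dim Lie conformal structure, the Novikov conformal axioms, and the compatibility \eqref{ccc1}. The bookkeeping you flag as the hard part is exactly what the paper's proof carries out, namely checking that the remaining coefficient identities are consequences of these three.
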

\begin{proof}
Obviously, we only need to prove this theorem for one special case, i.e. $i=r$. Other cases can be obtained similarly.

For convenience, set $\overline{\mathbf{\lambda}}=\overline{\mathbf{\lambda}}_r$, $\overline{\mathbf{T}}=\overline{\mathbf{T}}_r$,
$\overline{\mathbf{\mu}}=\overline{\mathbf{\mu}}_r$ and $\overline{R}=\overline{R}_r=\mathbb{C}[\overline{\mathbf{T}}_r]V$.

By (\ref{b1}), we can assume that
\begin{eqnarray}\label{bb1}
[a_{\mathbf{\lambda}}b]=T_r(a\circ_{\overline{\lambda}}b)+\lambda_r(a\ast_{\overline{\lambda}}b)+[a_{\overline{\lambda}}b],\qquad a,~~b\in V.
\end{eqnarray}
Here, $\circ_{\overline{\lambda}}$, $\ast_{\overline{\lambda}}$ and $[\cdot_{\overline{\lambda}}\cdot]$ are three $\mathbb{C}$-bilinear maps from $\overline{R}\otimes \overline{R}\rightarrow \mathbb{C}[\overline{\lambda}]\otimes \overline{R}$ where
$\overline{R}=\mathbb{C}[\overline{\mathbf{T}}]V$.

Since $[T_ia_{\lambda} b]=-\lambda_i[a_\lambda b]$ and $[a_\lambda T_i b]=(\lambda_i+T_i)[a_\lambda b]$ for
$1\leq i\leq r-1$, by comparing the coefficients of $T_r$, $\lambda_r$ and $T_r^{0}\lambda_r^{0}$, we can obtain that
\begin{eqnarray}
&&(T_i a)\circ_{\overline{\lambda}} b=-\lambda_i (a\circ_{\overline{\lambda}} b), \qquad a\circ_{\overline{\lambda}}(T_i b)=(\lambda_i+T_i)(a\circ_{\overline{\lambda}}b),\\
&&(T_i a)\ast_{\overline{\lambda}} b=-\lambda_i (a\ast_{\overline{\lambda}} b),\qquad a\ast_{\overline{\lambda}}(T_i b)=(\lambda_i+T_i)(a\ast_{\overline{\lambda}}b),\\
&&[(T_ia)_{\overline{\lambda}} b]=-\lambda_i[a_{\overline{\lambda }}b],\qquad [a_{\overline{\lambda}}T_i b]=(\lambda_i+T_i)[a_{\overline{\lambda}} b],
\end{eqnarray}
 for $1\leq i\leq r-1$.

For any $a\in V_{\alpha}$, $b\in V_{\beta}$, using (\ref{b1}), (\ref{a7}) becomes:
\begin{eqnarray*}
&&T_r(a\circ_{\overline{\lambda}}b)+\lambda_r(a\ast_{\overline{\lambda}}b)+[a_{\overline{\lambda}}b]\\
&&=-(-1)^{\alpha\beta}(T_r(b\circ_{-\overline{\lambda}-\overline{\mathbf{T}}}a)-(\lambda_r+T_r)(b\ast_{-\overline{\lambda}-\overline{T}}a)+[b_{-\overline{\lambda}-\overline{\mathbf{T}}}a]).
\end{eqnarray*}
Since $R$ is a free $\mathbb{C}[\mathbf{T}]$-module, by comparing the coefficients of $T_r$, $\lambda_r$ and $T_r^{0}\lambda_r^{0}$, we get
\begin{eqnarray}
&&\label{bb2}a\ast_{\overline{\lambda}}b=(-1)^{\alpha\beta}b\ast_{-\overline{\lambda}-\overline{\mathbf{T}}}a,\\
&&\label{bb3}b\ast_{-\overline{\lambda}-\overline{\mathbf{T}}}a=b\circ_{-\overline{\lambda}-\overline{\mathbf{T}}}a+(-1)^{\alpha\beta}a\circ_{\overline{\lambda}}b,\\
&&\label{bb4}[a_{\overline{\lambda}}b]=-(-1)^{\alpha\beta}[b_{-\overline{\lambda}-\overline{\mathbf{T}}}a].
\end{eqnarray}
By (\ref{bb1}) and (\ref{bb3}), we obtain
\begin{eqnarray}\label{bbb3}
a\ast_{\overline{\lambda}}b=a\circ_{\overline{\lambda}}b+(-1)^{\alpha\beta}b\circ_{-\overline{\lambda}-\overline{\mathbf{T}}}a.
\end{eqnarray}
Obviously, (\ref{bb2}) can be obtained from (\ref{bbb3}).

Next, we consider (\ref{a8}). For any $a\in V_{\alpha}$, $b\in V_{\beta}$, $c\in V_{\gamma}$, we have
\begin{eqnarray*}
&&[a_{\mathbf{\lambda}}[b_{\mathbf{\mu}}c]]\\
&=&[a_{\mathbf{\lambda}}(T_r(b\circ_{\overline{\mu}}c)+\mu_r(b\ast_{\overline{\mu}}c)+[b_{\overline{\mu}}c])]\\
&=&(\lambda_r+T_r)[a_\lambda(b\circ_{\overline{\mu}}c)]+\mu_r[a_\lambda(b\ast_{\overline{\mu}}c)]+[a_\lambda[b_{\overline{\mu}}c]]\\
&=&(\lambda_r+T_r)(T_r(a\circ_{\overline{\lambda}}(b\circ_{\overline{\mu}}c))
+\lambda_r(a\ast_{\overline{\lambda}}(b\circ_{\overline{\mu}}c))+[a_{\overline{\lambda}}(b\circ_{\overline{\mu}}c)])\\
&&+\mu_r(T_r(a\circ_{\overline{\lambda}}(b\ast_{\overline{\mu}}c))
+\lambda_r(a\ast_{\overline{\lambda}}(b\ast_{\overline{\mu}}c))+[a_{\overline{\lambda}}(b\ast_{\overline{\mu}}c)])\\
&&+T_r(a\circ_{\overline{\lambda}}[b_{\overline{\mu}}c])
+\lambda_r(a\ast_{\overline{\lambda}}[b_{\overline{\mu}}c])+[a_{\overline{\lambda}}[b_{\overline{\mu}}c)]]\\
&=&T_r^2(a\circ_{\overline{\lambda}}(b\circ_{\overline{\mu}}c))+\lambda_rT_r(a\circ_{\overline{\lambda}}(b\circ_{\overline{\mu}}c)+
a\ast_{\overline{\lambda}}(b\circ_{\overline{\mu}}c))\\
&&+\lambda_r\mu_r(a\ast_{\overline{\lambda}}(b\ast_{\overline{\mu}}c))+\mu_rT_r(a\circ_{\overline{\lambda}}(b\ast_{\overline{\mu}}c))
+\lambda_r^2(a\ast_{\overline{\lambda}}(b\circ_{\overline{\mu}}c))\\
&&+\lambda_r([a_{\overline{\lambda}}(b\circ_{\overline{\mu}}c)]+
a\ast_{\overline{\lambda}}[b_{\overline{\mu}}c])+T_r([a_{\overline{\lambda}}(b\circ_{\overline{\mu}}c)]+a\circ_{\overline{\lambda}}[b_{\overline{\mu}}c])\\
&&+\mu_r[a_{\overline{\lambda}}(b\ast_{\overline{\mu}}c)]+[a_{\overline{\lambda}}[b_{\overline{\mu}}c]].
\end{eqnarray*}

Similarly, we can obtain
\begin{eqnarray*}
&&[b_{\mathbf{\mu}}[a_{\mathbf{\lambda}}c]]\\
&=&T_r^2(b\circ_{\overline{\mu}}(a\circ_{\overline{\lambda}}c))+\mu_rT_r(b\circ_{\overline{\mu}}(a\circ_{\overline{\lambda}}c)+
b\ast_{\overline{\mu}}(a\circ_{\overline{\lambda}}c))\\
&&+\lambda_r\mu_r(b\ast_{\overline{\mu}}(a\ast_{\overline{\lambda}}c))+\lambda_rT_r(b\circ_{\overline{\mu}}(a\ast_{\overline{\lambda}}c))
+\mu_r^2(b\ast_{\overline{\mu}}(a\circ_{\overline{\lambda}}c))\\
&&+\mu_r([b_{\overline{\mu}}(a\circ_{\overline{\lambda}}c)]+
b\ast_{\overline{\mu}}[a_{\overline{\lambda}}c])+T_r([b_{\overline{\mu}}(a\circ_{\overline{\mu}}c)]+b\circ_{\overline{\mu}}[a_{\overline{\lambda}}c])\\
&&+\lambda_r[b_{\overline{\mu}}(a\ast_{\overline{\lambda}}c)]+[b_{\overline{\mu}}[a_{\overline{\lambda}}c]].
\end{eqnarray*}

On the other hand, we have
\begin{eqnarray*}
&&[[a_\lambda b]_{\lambda+\mu}c]\\
&=&[(T_r(a\circ_{\overline{\lambda}}b)+\lambda_r(a\ast_{\overline{\lambda}}b)+[a_{\overline{\lambda}}b])_{\lambda+\mu}c]\\
&=&(-\lambda_r-\mu_r)[(a\circ_{\overline{\lambda}}b)_{\lambda+\mu}c]+\lambda_r[(a\ast_{\overline{\lambda}}b)_{\lambda+\mu}c]
+[[a_{\overline{\lambda}}b]_{\lambda+\mu}c]\\
&=&(-\lambda_r-\mu_r)(T_r((a\circ_{\overline{\lambda}}b)\circ_{\overline{\lambda}+\overline{\mu}}c)
+(\lambda_r+\mu_r)((a\circ_{\overline{\lambda}}b)\ast_{\overline{\lambda}+\overline{\mu}}c)
+[(a\circ_{\overline{\lambda}}b)_{\overline{\lambda}+\overline{\mu}}c])\\
&&+\lambda_r(T_r((a\ast_{\overline{\lambda}}b)\circ_{\overline{\lambda}+\overline{\mu}}c)
+(\lambda_r+\mu_r)((a\ast_{\overline{\lambda}}b)\ast_{\overline{\lambda}+\overline{\mu}}c)
+[(a\ast_{\overline{\lambda}}b)_{\overline{\lambda}+\overline{\mu}}c])\\
&&+T_r([a_{\overline{\lambda}}b]\circ_{\overline{\lambda}+\overline{\mu}}c)
+(\lambda_r+\mu_r)([a_{\overline{\lambda}}b]\ast_{\overline{\lambda}+\overline{\mu}}c)
+[[a_{\overline{\lambda}}b]_{\overline{\lambda}+\overline{\mu}}c]\\
&=&\lambda_r^2((a\ast_{\overline{\lambda}}b)\ast_{\overline{\lambda}+\overline{\mu}}c-(a\circ_{\overline{\lambda}}b)\ast_{\overline{\lambda}+\overline{\mu}}c)
+\lambda_rT_r((a\ast_{\overline{\lambda}}b)\circ_{\overline{\lambda}+\overline{\mu}}c-(a\circ_{\overline{\lambda}}b)\circ_{\overline{\lambda}+\overline{\mu}}c)\\
&&+\lambda_r\mu_r((a\ast_{\overline{\lambda}}b)\ast_{\overline{\lambda}+\overline{\mu}}c-2(a\circ_{\overline{\lambda}}b)\ast_{\overline{\lambda}+\overline{\mu}}c)
-\mu_rT_r(a\circ_{\overline{\lambda}}b)\circ_{\overline{\lambda}+\overline{\mu}}c\\
&&-\mu_r^2(a\circ_{\overline{\lambda}}b)\ast_{\overline{\lambda}+\overline{\mu}}c+\lambda_r([(a\ast_{\overline{\lambda}}b)_{\overline{\lambda}+\overline{\mu}}c]-[(a\circ_{\overline{\lambda}}b)_{\overline{\lambda}+\overline{\mu}}c]+
[a_{\overline{\lambda}}b]\ast_{\overline{\lambda}+\overline{\mu}}c)\\
&&+T_r([a_{\overline{\lambda}}b]\circ_{\overline{\lambda}+\overline{\mu}}c)+ \mu_r([a_{\overline{\lambda}}b]\ast_{\overline{\lambda}+\overline{\mu}}c-[(a\circ_{\overline{\lambda}}b)\ast_{\overline{\lambda}+\overline{\mu}}c])
+[[a_{\overline{\lambda}}b]_{\overline{\lambda}+\overline{\mu}}c].
\end{eqnarray*}

By (\ref{a8}), comparing the coefficients of $T_r^2$, $\lambda_rT_r$, $\mu_rT_r$, $\lambda_r\mu_r$,
$\lambda_r^2$, $\mu_r^2$, $T_r$, $\lambda_r$, $\mu_r$ and $1$, we obtain the following equalities:
\begin{eqnarray}\label{c1}
a\circ_{\overline{\lambda}}(b\circ_{\overline{\mu}}c)-(-1)^{\alpha\beta}b\circ_{\overline{\mu}}(a\circ_{\overline{\lambda}}c)=0,\end{eqnarray}
\begin{eqnarray}\label{c2}a\circ_{\overline{\lambda}}(b\circ_{\overline{\mu}}c)+a\ast_{\overline{\lambda}}(b\circ_{\overline{\mu}}c)+(a\circ_{\overline{\lambda}}b)\circ_{\overline{\lambda}+\overline{\mu}}c
-(a\ast_{\overline{\lambda}}b)\circ_{\overline{\lambda}+\overline{\mu}}c\end{eqnarray}
$$-(-1)^{\alpha\beta}b\circ_{\overline{\mu}}(a\ast_{\overline{\lambda}}c)=0,$$
\begin{eqnarray}\label{c3}a\circ_{\overline{\lambda}}(b\ast_{\overline{\mu}}c)+(a\circ_{\overline{\lambda}}b)\circ_{\overline{\lambda}+\overline{\mu}}c-(-1)^{\alpha\beta}(
b\circ_{\overline{\mu}}(a\circ_{\overline{\lambda}}c)\end{eqnarray}
$$+b\ast_{\overline{\mu}}(a\circ_{\overline{\lambda}}c))=0,$$
\begin{eqnarray}\label{c4}a\ast_{\overline{\lambda}}(b\ast_{\overline{\mu}}c)+2(a\circ_{\overline{\lambda}}b)\ast_{\overline{\lambda}+\overline{\mu}}c-(a\ast_{\overline{\lambda}}b)\ast_{\overline{\lambda}+\overline{\mu}}c\end{eqnarray}
$$-(-1)^{\alpha\beta}b\ast_{\overline{\mu}}(a\ast_{\overline{\lambda}}c)=0,$$
\begin{eqnarray}\label{c5}a\ast_{\overline{\lambda}}(b\circ_{\overline{\mu}}c)-(a\ast_{\overline{\lambda}}b)\ast_{\overline{\lambda}+\overline{\mu}}c+(a\circ_{\overline{\lambda}}b)\ast_{\overline{\lambda}+\overline{\mu}}c=0,\end{eqnarray}
\begin{eqnarray}\label{c6}(a\circ_{\overline{\lambda}}b)\ast_{\overline{\lambda}+\overline{\mu}}c-(-1)^{\alpha\beta}b\ast_{\overline{\mu}}(a\circ_{\overline{\lambda}}c)=0,\end{eqnarray}
\begin{eqnarray}\label{c7}
[a_{\overline{\lambda}}(b\circ_{\overline{\mu}}c)]+a\circ_{\overline{\lambda}}[b_{\overline{\mu}}c]-[a_{\overline{\lambda}}b]\circ_{\overline{\lambda}+\overline{\mu}}c\end{eqnarray}
$$-(-1)^{\alpha\beta}
([b_{\overline{\mu}}(a\circ_{\overline{\lambda}}c)]+b\circ_{\overline{\mu}}[a_{\overline{\lambda}}c])=0,$$
\begin{eqnarray}\label{c8}[a_{\overline{\lambda}}(b\circ_{\overline{\mu}}c)]+a\ast_{\overline{\lambda}}[b_{\overline{\mu}}c]+
[(a\circ_{\overline{\lambda}}b)_{\overline{\lambda}+\overline{\mu}}c]-[(a\ast_{\overline{\lambda}}b)_{\overline{\lambda}+\overline{\mu}}c]
\end{eqnarray}
$$-[a_{\overline{\lambda}}b]\ast_{\overline{\lambda}+\overline{\mu}}c-(-1)^{\alpha\beta}[b_{\overline{\mu}}(a\ast_{\overline{\lambda}}c)]=0,$$
\begin{eqnarray}\label{c9}[a_{\overline{\lambda}}(b\ast_{\overline{\mu}}c)]+[(a\circ_{\overline{\lambda}}b)_{\overline{\lambda}+\overline{\mu}}c]
-[a_{\overline{\lambda}}b]\ast_{\overline{\lambda}+\overline{\mu}}c\end{eqnarray}
$$-(-1)^{\alpha\beta}(b\ast_{\overline{\mu}}[a_{\overline{\lambda}}c]+[b_{\overline{\mu}}(a\circ_{\overline{\lambda}}c)])=0,$$
\begin{eqnarray}\label{c10}[a_{\overline{\lambda}}[b_{\overline{\mu}}c]]-[[a_{\overline{\lambda}}b]_{\overline{\lambda}+\overline{\mu}}c]-(-1)^{\alpha\beta}[b_{\overline{\mu}}[a_{\overline{\lambda}}c]]=0.\end{eqnarray}

By (\ref{bb4}) and (\ref{c10}), we know that $(\overline{R},[\cdot_{\overline{\lambda}}\cdot]$ is an $(r-1)$-dim Lie conformal superalgebra.

We first consider (\ref{c3}). Taking (\ref{bbb3}) into (\ref{c3}) and according to (\ref{c1}), we can get
\begin{eqnarray}\label{d1}
a\circ_{\overline{\lambda}}(b_{\overline{\mu}}c)-(a\circ_{\overline{\lambda}}b)_{\overline{\lambda}+\overline{\mu}}c
=(-1)^{\beta\gamma}(a\circ_{\overline{\lambda}}(c_{-\overline{\mu}-\overline{\mathbf{T}}}b)-(a_{\overline{\lambda}}c)_{-\overline{\mu}-\overline{\mathbf{T}}}b).
\end{eqnarray}
By (\ref{c1}) and (\ref{d1}), we know that $(\overline{R},\cdot\circ_{\overline{\lambda}}\cdot)$ is an $(r-1)$-dim right Novikov conformal superalgebra.

Next, we show that when (\ref{c1}) and (\ref{d1}) are satisfied, (\ref{c2}), (\ref{c4}), (\ref{c5}) and (\ref{c6}) holds. Here, we only prove that (\ref{c4}) holds. Other equalities can be checked similarly. The right side of (\ref{c4}) is equal to the following:
\begin{eqnarray*}
&&a\circ_{\overline{\lambda}}(b\ast_{\overline{\mu}}c)
+(-1)^{\alpha(\beta+\gamma)}(b\ast_{\overline{\mu}}c)\circ_{-\overline{\lambda}-\overline{\mathbf{T}}}a
+2(a\circ_{\overline{\lambda}}b)\circ_{\overline{\lambda}+\overline{\mu}}c\\
&&+2(-1)^{(\alpha+\beta)\gamma}c\circ_{-\overline{\lambda}-\overline{\mu}
-\overline{\mathbf{T}}}(a\circ_{\overline{\lambda}}b)
-(a\ast_{\overline{\lambda}}b)\circ_{\overline{\lambda}+\overline{\mu}}c\\
&&-(-1)^{\gamma(\alpha+\beta)}c\circ_{-\overline{\lambda}-\overline{\mu}
-\overline{\mathbf{T}}}(a\ast_{\overline{\lambda}}b)-(-1)^{\alpha\beta}b\circ_{\overline{\mu}}(a\ast_{\overline{\lambda}}c)\\
&&-(-1)^{\beta\gamma}(a\ast_{\overline{\lambda}}c)\circ_{-\overline{\mu}-\overline{\mathbf{T}}}b\\
&&=a\circ_{\overline{\lambda}}(b\circ_{\overline{\mu}}c)
+(-1)^{\beta\gamma}a\circ_{\overline{\lambda}}(c\circ_{-\overline{\mu}-\overline{\mathbf{T}}}b)
+(-1)^{\alpha(\beta+\gamma)}(b\circ_{\overline{\mu}}c)\circ_{-\overline{\lambda}-\overline{\mathbf{T}}}a\\
&&+(-1)^{\alpha(\beta+\gamma)+\beta\gamma}(c\circ_{-\overline{\mu}-\overline{\mathbf{T}}}b)\circ_{-\overline{\lambda}-\overline{\mathbf{T}}}a
+2(a\circ_{\overline{\lambda}}b)\circ_{\overline{\lambda}+\overline{\mu}}c\\
&&+2(-1)^{(\alpha+\beta)\gamma}c\circ_{-\overline{\lambda}-\overline{\mu}
-\overline{\mathbf{T}}}(a\circ_{\overline{\lambda}}b)
-(a\circ_{\overline{\lambda}}b)\circ_{\overline{\lambda}+\overline{\mu}}c
-(-1)^{\alpha\beta}(b\circ_{-\overline{\lambda}-\overline{\mathbf{T}}}a)\circ_{\overline{\lambda}+\overline{\mu}}c\\
&&-(-1)^{\gamma(\alpha+\beta)}c\circ_{-\overline{\lambda}-\overline{\mu}
-\overline{\mathbf{T}}}(a\circ_{\overline{\lambda}}b)
-(-1)^{\gamma(\alpha+\beta)+\alpha\beta}c\circ_{-\overline{\lambda}-\overline{\mu}-\overline{\mathbf{T}}}(b\circ_{-\overline{\lambda}-\overline{\mathbf{T}}}a)
\\
&&-(-1)^{\alpha\beta}b\circ_{\overline{\mu}}(a\circ_{\overline{\lambda}}c)-(-1)^{\alpha(\beta+\gamma)}b\circ_{\overline{\mu}}(c\circ_{-\overline{\lambda}-\overline{T}}a)\\
&&-(-1)^{\beta\gamma}(a\circ_{\overline{\lambda}}c)\circ_{-\overline{\mu}-\overline{\mathbf{T}}}b
-(-1)^{\gamma(\alpha+\beta)}(c\circ_{-\overline{\lambda}-\overline{\mathbf{T}}}a)\circ_{-\overline{\mu}-\overline{\mathbf{T}}}b\\
&&=(-1)^{(\alpha+\beta)\gamma}(c\circ_{-\overline{\lambda}-\overline{\mu}-\overline{\mathbf{T}}}(a\circ_{\overline{\lambda}}b)
-(c\circ_{-\overline{\lambda}-\overline{\mathbf{T}}}a)\circ_{-\overline{\mu}-\overline{\mathbf{T}}}b\\
&&-(-1)^{\alpha\beta}
c\circ_{-\overline{\lambda}-\overline{\mu}-\overline{\mathbf{T}}}(b\circ_{-\overline{\lambda}-\overline{\mathbf{T}}}a)
+(-1)^{\alpha\beta}(c\circ_{-\overline{\mu}-\overline{\mathbf{T}}}b)\circ_{-\overline{\lambda}-\overline{\mathbf{T}}}a)\\
&&-(a\circ_{\overline{\lambda}}(b\circ_{\overline{\mu}}c)
-(a\circ_{\overline{\lambda}}b)\circ_{\overline{\lambda}+\overline{\mu}}c
-(-1)^{\beta\gamma}a\circ_{\overline{\lambda}}(c\circ_{-\overline{\mu}-\overline{\mathbf{T}}}b)\\
&&+(-1)^{\beta\gamma}(a\circ_{\overline{\lambda}}c)\circ_{-\overline{\mu}-\overline{\mathbf{T}}}b)
+(-1)^{\alpha\beta}(b\circ_{\overline{\mu}}(a\circ_{\overline{\lambda}}c)
-(b\circ_{-\overline{\lambda}-\overline{\mathbf{T}}}a)\circ_{\overline{\lambda}+\overline{\mu}}c\\
&&-(-1)^{\alpha\gamma}b\circ_{\overline{\mu}}(c\circ_{-\overline{\lambda}-\overline{\mathbf{T}}}a)+
(-1)^{\alpha\gamma}(b\circ_{\overline{\mu}}c)\circ_{-\overline{\lambda}-\overline{\mathbf{T}}}a)\\
&&=0.
\end{eqnarray*}

Finally, we show that (\ref{c8}) and (\ref{c9}) can be obtained from (\ref{c7}). Here, we only prove that (\ref{c8}) holds. (\ref{c9}) can be checked similarly. Using (\ref{bb4}) and (\ref{c7}), the right side of (\ref{c8}) is as follows:
\begin{eqnarray*}
&&[a_{\overline{\lambda}}(b\circ_{\overline{\mu}}c)]+a\circ_{\overline{\lambda}}[b_{\overline{\mu}}c]
+(-1)^{\alpha(\beta+\gamma)}[b_{\overline{\mu}}c]\circ_{-\overline{\lambda}-\overline{\mathbf{T}}}a+
[(a\circ_{\overline{\lambda}}b)_{\overline{\lambda}+\overline{\mu}}c]\\
&&-[(a\circ_{\overline{\lambda}}b)_{\overline{\lambda}+\overline{\mu}}c]
-(-1)^{\alpha\beta}[(b\circ_{-\overline{\lambda}-\overline{\mathbf{T}}}a)_{\overline{\lambda}+\overline{\mu}}c]
-[a_{\overline{\lambda}}b]\circ_{\overline{\lambda}+\overline{\mu}}c\\
&&-(-1)^{(\alpha+\beta)\gamma}c\circ_{-\overline{\lambda}-\overline{\mu}-\overline{\mathbf{T}}}[a_{\overline{\lambda}}b]
-(-1)^{\alpha\beta}[b_{\overline{\mu}}(a\circ_{\overline{\lambda}}c)]
-(-1)^{\alpha(\beta+\gamma)}[b_{\overline{\mu}}(c\circ_{-\overline{\lambda}-\overline{\mathbf{T}}}a)]\\
&&=([a_{\overline{\lambda}}(b\circ_{\overline{\mu}}c)]+a\circ_{\overline{\lambda}}[b_{\overline{\mu}}c]
-[a_{\overline{\lambda}}b]\circ_{\overline{\lambda}+\overline{\mu}}c
-(-1)^{\alpha\beta}[b_{\overline{\mu}}(a\circ_{\overline{\lambda}}c)])\\
&&+(-1)^{\alpha(\beta+\gamma)}[b_{\overline{\mu}}c]\circ_{-\overline{\lambda}-\overline{\mathbf{T}}}a
+(-1)^{\alpha\beta+(\alpha+\beta)\gamma)}[c_{-\overline{\lambda}-\overline{\mu}-\overline{\mathbf{T}}}(b\circ_{\overline{\lambda}-\overline{\mathbf{T}}}a)]\\
&&+(-1)^{(\alpha+\beta)\gamma+\alpha\beta}c\circ_{-\overline{\lambda}-\overline{\mu}-\overline{\mathbf{T}}}[b_{-\overline{\lambda}-\overline{\mathbf{T}}}a]
-(-1)^{\alpha(\beta+\gamma)}[b_{\overline{\mu}}(c\circ_{-\overline{\lambda}-\overline{\mathbf{T}}}a)]\\
&&=-(-1)^{\alpha(\beta+\gamma)}([b_{\overline{\mu}}(c\circ_{-\overline{\lambda}-\overline{\mathbf{T}}}a)]+
b\circ_{\overline{\mu}}[c_{-\overline{\lambda}-\overline{\mathbf{T}}}a]-
[b_{\overline{\mu}}c]\circ_{-\overline{\lambda}-\overline{\mathbf{T}}}a\\
&&-(-1)^{\beta\gamma}[c_{-\overline{\lambda}-\overline{\mu}-\overline{\mathbf{T}}}(b\circ_{\overline{\lambda}-\overline{\mathbf{T}}}a)]
-(-1)^{\beta\gamma}c\circ_{-\overline{\lambda}-\overline{\mu}-\overline{\mathbf{T}}}[b_{-\overline{\lambda}-\overline{\mathbf{T}}}a])\\
&&=0.
\end{eqnarray*}

Therefore, from the above discussion, the Lie conformal superalgebra structure of $R$ is determined by $(\overline{R},[\cdot_{\overline{\lambda}}\cdot], \cdot\circ_{\overline{\lambda}}\cdot)$ where $(\overline{R},[\cdot_{\overline{\lambda}}\cdot])$ is an $(r-1)$-dim Lie conformal superalgebra,
$(\overline{R},\cdot\circ_{\overline{\lambda}}\cdot)$ is an $(r-1)$-dim right Novikov conformal superalgebra, and they satisfy (\ref{c7}).

For the unity with Theorem \ref{TTa1}, set
\begin{eqnarray}\label{aa1} a_{\overline{\lambda}}b=(-1)^{\alpha\beta}(b\circ_{-\overline{\lambda}-\overline{\mathbf{T}}}a)
\end{eqnarray} for
$a\in \overline{R}_{\alpha}$, $b\in \overline{R}_{\beta}$. Therefore, $(\overline{R},\cdot_{\overline{\lambda}}\cdot)$ is an $(r-1)$-dim Novikov conformal algebra. By (\ref{c7}), using (\ref{aa1}), we can obtain
the following equality:
\begin{eqnarray}\label{aa2}
[(c_{-\overline{\mathbf{\mu}}-\overline{\mathbf{T}}}b)_{-\overline{\mathbf{\lambda}}-\overline{\mathbf{T}}}a]
+[c_{-\overline{\mathbf{\mu}}-\overline{\mathbf{T}}}b]_{-\overline{\mathbf{\lambda}}-\overline{\mathbf{T}}}a
-c_{-\overline{\mathbf{\lambda}}-\overline{\mathbf{\mu}}-\overline{\mathbf{T}}}[b_{-\overline{\mathbf{\lambda}}-\overline{\mathbf{T}}}a]\end{eqnarray}
$$-(-1)^{\alpha\beta}[(c_{-\overline{\mathbf{\lambda}}-\overline{\mathbf{T}}}a)_{-\overline{\mathbf{\mu}}-\overline{\mathbf{T}}}b]
-(-1)^{\alpha\beta}[c_{-\overline{\mathbf{\lambda}}-\overline{\mathbf{T}}}a]_{-\overline{\mathbf{\mu}}-\overline{\mathbf{T}}}b=0,$$
for $a\in \overline{R}_{\alpha}$, $b\in \overline{R}_{\beta}$, $c\in \overline{R}$.
Therefore, by (\ref{aa2}), we obtain that $(\overline{R},[\cdot_{\overline{\lambda}}\cdot], \cdot_{\overline{\lambda}}\cdot)$ is an $(r-1)$-dim
super Gel'fand-Dorfman conformal bialgebra.

Thus, the theorem holds when $i=r$. So, the proof is finished.
\end{proof}

By Theorem \ref{TT1}, we know that one $r$-dim super Gel'fand-Dorfman conformal bialgebra can correspond to $r$ $(r+1)$-dim Lie conformal superalgebras.  In the following, we only consider the $r$-dim $r$-linear Lie conformal superalgebras.

Next, we give two remarks about $r$-dim super Gel'fand-Dorfman conformal bialgebras.

\begin{remark}\label{ra1}
In fact, there are some relations between $r$-dim super Gel'fand-Dorfman conformal bialgebra and super Gel'fand-Dorfman bialgebra. If $(R=\mathbb{C}[\mathbf{T}]V,[\cdot_\lambda \cdot],
\cdot_\lambda \cdot)$ is an $r$-dim super Gel'fand-Dorfman conformal bialgebra, then $(\text{Lie} R,[\cdot,\cdot],\circ)$ is a super Gel'fand-Dorfman bialgebra with the Lie bracket and the Novikov algebraic operation as follows:
\begin{eqnarray}
[a_{\mathbf{m}}, b_\mathbf{n}]=\sum_{\mathbf{j}\in \mathbb{Z}_{+}^r}\left(\begin{array}{ccc}
\mathbf{m}\\\mathbf{j}\end{array}\right)(a_{(\mathbf{j})}b)_{\mathbf{m}+\mathbf{n}-\mathbf{j}},\\
a_{\mathbf{m}}\circ b_\mathbf{n}=\sum_{\mathbf{j}\in \mathbb{Z}_{+}^r}\left(\begin{array}{ccc}
\mathbf{m}\\\mathbf{j}\end{array}\right)(a_{\mathbf{j}}b)_{\mathbf{m}+\mathbf{n}-\mathbf{j}},
\end{eqnarray}
where $[a_{\mathbf{\lambda}}b]=\sum_{\mathbf{m}\in \mathbb{Z}_{+}^r}\mathbf{\lambda}^{(\mathbf{m})}a_{(\mathbf{m})}b$ and
$a_{\mathbf{\lambda}}b=\sum_{\mathbf{m}\in \mathbb{Z}_{+}^r}\mathbf{\lambda}^{(\mathbf{m})}a_{\mathbf{m}}b$. The proof is similar to that in Section 2.7 in \cite{K1}.
\end{remark}

\begin{remark}
Obviously, any $r$-dim Novikov conformal superalgebra $(R=\mathbb{C}[\mathbf{T}]V,$ $\cdot_\lambda \cdot)$ with the trivial $r$-dim Lie conformal superalgebra can construct a super Gel'fand-Dorfman conformal bialgebra. Therefore, by Theorem \ref{TT1}, given an $r$-dim Novikov conformal superalgebra $(R=\mathbb{C}[\mathbf{T}]V,$ $\cdot_\lambda \cdot)$, we can obtain an $(r+1)$-dim Lie conformal superalgebra $(\widetilde{R}=\mathbb{C}[T_1,\cdots,T_r, T_{r+1}]V, [\cdot_{\widetilde{\lambda}} \cdot])$  with the $\lambda$-bracket as follows:
\begin{eqnarray}
[a_{\widetilde{\lambda}}b]=T_{r+1}((-1)^{\alpha\beta}b_{-\lambda-\mathbf{T}}a)+\lambda_{r+1}((-1)^{\alpha\beta}b_{-\lambda-\mathbf{T}}a
+a_{\lambda}b),
\end{eqnarray}
where $\widetilde{\lambda}=(\lambda_1,\cdots,\lambda_r,\lambda_{r+1})$, $a\in V_{\alpha}$, $b\in V_{\beta}$.
\end{remark}
\begin{proposition}\label{pp1}
Let $(R,{\cdot}_{\mathbf{\lambda}}{\cdot})$ be an $r$-dim  Novikov conformal superalgebra. Then,
$(R,{\cdot}_{\mathbf{\lambda}}{\cdot},[{\cdot}_{\mathbf{\lambda}}{\cdot}])$ is an $r$-dim super Gel'fand-Dorfman conformal bialgebra with the
$\mathbf{\lambda}$-bracket given as follows:
\begin{eqnarray}\label{ccc2}
[a_{\mathbf{\lambda}}b]=a_{\mathbf{\lambda}}b-(-1)^{\alpha\beta}b_{-\mathbf{\lambda}-\mathbf{T}}a, ~~~a\in R_{\alpha},~~b\in R_{\beta}.
\end{eqnarray}
\end{proposition}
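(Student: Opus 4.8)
The plan is to verify, one at a time, the three conditions that make up the definition of an $r$-dim super Gel'fand-Dorfman conformal bialgebra --- that is, the conditions of Theorem \ref{TT1} read with the index $i$ suppressed and with every barred vector $\overline{\mathbf{\lambda}}_i,\overline{\mathbf{\mu}}_i,\overline{\mathbf{T}}_i$ replaced by the full $r$-dimensional $\mathbf{\lambda},\mathbf{\mu},\mathbf{T}$: that $(R,{\cdot}_{\mathbf{\lambda}}{\cdot})$ is an $r$-dim Novikov conformal superalgebra, that $(R,[{\cdot}_{\mathbf{\lambda}}{\cdot}])$ is an $r$-dim Lie conformal superalgebra, and that the compatibility relation (\ref{ccc1}) holds. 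The first is exactly the hypothesis. The second is immediate from Proposition \ref{ppp1}, since the $\mathbf{\lambda}$-bracket (\ref{ccc2}) is precisely the commutator bracket built there out of ${\cdot}_{\mathbf{\lambda}}{\cdot}$. So everything reduces to establishing (\ref{ccc1}).

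For that, I would substitute the definition (\ref{ccc2}) of $[{\cdot}_{\mathbf{\lambda}}{\cdot}]$ into each of the five $\mathbf{\lambda}$-brackets on the left-hand side of (\ref{ccc1}). Each bracket splits into two summands; using sesquilinearity, together with the skew-symmetry relation of Remark \ref{r1} to trade a product carrying a shifted spectral parameter $-\mathbf{\nu}-\mathbf{T}$ for the opposite product with an unshifted one, the whole left-hand side becomes a sum of ten iterated $\mathbf{\lambda}$-products of $a,b,c$ with unshifted parameters and prescribed Koszul signs. I would then collapse this sum using only the two axioms of a Novikov conformal superalgebra other than sesquilinearity --- the conformal analogues of right-commutativity and left-symmetry --- in exactly the way (\ref{c7}) is treated inside the proof of Theorem \ref{TT1}: one application of right-commutativity makes four of the terms cancel, and the remaining six telescope to zero after a few further uses of left-symmetry and right-commutativity. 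This is the conformal-superalgebra counterpart of the elementary fact that a Novikov algebra equipped with its commutator bracket is a Gel'fand-Dorfman bialgebra; in fact, up to the relabelling $(a,b,c)\mapsto(c,b,a)$, the calculation coincides with the one already performed in the proof of Theorem \ref{TT1} when (\ref{aa2}) is deduced from (\ref{c7}).

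The only real obstacle is bookkeeping: keeping the Koszul signs $(-1)^{\alpha\beta}$, $(-1)^{\beta\gamma}$, $(-1)^{\alpha\gamma}$ mutually consistent, and carrying out correctly the nested substitutions $\mathbf{\nu}\mapsto-\mathbf{\nu}-\mathbf{T}$, where one must be careful about which operator $\mathbf{T}$ acts on which argument. Conceptually nothing beyond the defining axioms of a Novikov conformal superalgebra is required.
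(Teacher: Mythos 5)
Your proposal is correct and follows the paper's own route: the paper likewise invokes Proposition \ref{ppp1} to dispose of the Lie conformal superalgebra axioms, then substitutes (\ref{ccc2}) into (\ref{ccc1}), expands the five brackets into ten iterated $\mathbf{\lambda}$-products, and regroups them into two instances of the left-symmetry axiom and one instance of the right-commutativity axiom of a Novikov conformal superalgebra, each of which vanishes. The only difference is cosmetic bookkeeping (the paper keeps the shifted parameters $-\mathbf{\mu}-\mathbf{T}$, $-\mathbf{\lambda}-\mathbf{T}$ throughout rather than normalizing them via Remark \ref{r1}).
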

\begin{proof}
By Proposition \ref{ppp1}, we only need to check that (\ref{ccc1}) hold. By (\ref{ccc2}),
the right side of (\ref{ccc1}) is the following:
\begin{eqnarray*}
&&(a_{-\mu-\mathbf{T}}b)_{-\lambda-\mathbf{T}}c-(-1)^{(\alpha+\beta)\gamma}c_{\lambda}(a_{-\mu-\mathbf{T}}b)
+(a_{-\mu-\mathbf{T}}b)_{-\lambda-\mathbf{T}}c\\
&&-(-1)^{\alpha\beta}(b_{\mu}a)_{-\lambda-\mathbf{T}}c
-a_{-\lambda-\mu-\mathbf{T}}(b_{-\lambda-\mathbf{T}}c)
+(-1)^{\beta\gamma}a_{-\lambda-\mu-\mathbf{T}}(c_{\lambda}b)\\
&&-(-1)^{\beta\gamma}(a_{-\lambda-\mathbf{T}}c)_{-\mu-\mathbf{T}}b
+(-1)^{\alpha\beta}b_{\mu}(a_{-\lambda-\mathbf{T}}c)\\
&&-(-1)^{\beta\gamma}(a_{-\lambda-\mathbf{T}}c)_{-\mu-\mathbf{T}}b+(-1)^{(\alpha+\beta)\gamma}(c_{\lambda}a)_{-\mu-\mathbf{T}}b\\
&=&((a_{-\mu-\mathbf{T}}b)_{-\lambda-\mathbf{T}}c-a_{-\lambda-\mu-\mathbf{T}}(b_{-\lambda-\mathbf{T}}c)-(-1)^{\alpha\beta}(b_{\mu}a)_{-\lambda-\mathbf{T}}c
\\
&&+(-1)^{\alpha\beta}b_{\mu}(a_{-\lambda-\mathbf{T}}c))-(-1)^{\beta\gamma}((a_{-\lambda-\mathbf{T}}c)_{-\mu-\mathbf{T}}b-a_{-\lambda-\mu-\mathbf{T}}(c_{\lambda}b)\\
&&+(-1)^{\alpha\gamma}c_{\lambda}(a_{-\mu-\mathbf{T}}b)-(-1)^{\alpha\gamma}(c_{\lambda}a)_{-\mu-\mathbf{T}}b)\\
&&+((a_{-\mu-\mathbf{T}}b)_{-\lambda-\mathbf{T}}c-(-1)^{\beta\gamma}(a_{-\lambda-\mathbf{T}}c)_{-\mu-\mathbf{T}}b)\\
&=&0.
\end{eqnarray*}
Therefore, $(R,{\cdot}_{\mathbf{\lambda}}{\cdot},[{\cdot}_{\mathbf{\lambda}}{\cdot}])$ is an $r$-dim super Gel'fand-Dorfman conformal bialgebra.
\end{proof}

By Propositions \ref{pp1} and \ref{pp2}, we can easily obtain the following corollary.
\begin{corollary}\label{cccc1}
If $(V, \circ, \cdot)$ is a Novikov-Poisson superalgebra, then $R=\mathbb{C}[T]V$ is a super Gel'fand-Dorfman conformal algebra with the following $\lambda$-product and $\lambda$-bracket $(a\in V_\alpha, b\in V_\beta)$:
\begin{eqnarray}
&&a_\lambda b=(\lambda+T)(a\cdot b)+a\circ b,\\
&& [a_\lambda b]=(T+2\lambda)(a\cdot b)+(a\circ b-(-1)^{\alpha\beta}b\circ a).
\end{eqnarray}
\end{corollary}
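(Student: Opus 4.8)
The plan is to obtain this statement as a direct composition of Propositions~\ref{pp2} and~\ref{pp1}, followed by a short computation rewriting the resulting $\lambda$-bracket in closed form. First I would apply Proposition~\ref{pp2} to the given Novikov-Poisson superalgebra $(V,\circ,\cdot)$: this endows $R=\mathbb{C}[T]V$ with a (one-dimensional) Novikov conformal superalgebra structure whose $\lambda$-product is exactly $a_\lambda b=(\lambda+T)(a\cdot b)+a\circ b$, which is the first formula claimed in the corollary. Then I would feed this Novikov conformal superalgebra into Proposition~\ref{pp1} with $r=1$; it yields on the same $\mathbb{C}[T]$-module a $1$-dim super Gel'fand-Dorfman conformal bialgebra structure $(R,\cdot_\lambda\cdot,[\cdot_\lambda\cdot])$ with $[a_\lambda b]=a_\lambda b-(-1)^{\alpha\beta}b_{-\lambda-T}a$. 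Hence the only thing left to check is that this bracket coincides with $(T+2\lambda)(a\cdot b)+(a\circ b-(-1)^{\alpha\beta}b\circ a)$.

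For that last step I would expand the $\lambda$-product $b_\mu a=(\mu+T)(b\cdot a)+b\circ a$ in powers of $\mu$; its only nonzero coefficients are the constant term $T(b\cdot a)+b\circ a$ and the linear term $b\cdot a$. Performing the substitution $\mu\mapsto-\lambda-T$, with the new $T$ acting on these coefficients, the two copies of $T$ cancel and one is left with
\begin{eqnarray*}
b_{-\lambda-T}a=-\lambda(b\cdot a)+b\circ a.
\end{eqnarray*}
Substituting this into $[a_\lambda b]=a_\lambda b-(-1)^{\alpha\beta}b_{-\lambda-T}a$ and using the supercommutativity of $\cdot$, i.e.\ $(-1)^{\alpha\beta}b\cdot a=a\cdot b$, yields
\begin{eqnarray*}
[a_\lambda b]=(\lambda+T)(a\cdot b)+a\circ b+(-1)^{\alpha\beta}\lambda(b\cdot a)-(-1)^{\alpha\beta}b\circ a=(T+2\lambda)(a\cdot b)+\bigl(a\circ b-(-1)^{\alpha\beta}b\circ a\bigr),
\end{eqnarray*}
as claimed.

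Since both ingredient propositions are already available, there is no genuine analytic content here — the entire argument is bookkeeping, and the one place that needs care is the substitution $\mu\mapsto-\lambda-T$ in $b_{-\lambda-T}a$, where one must keep track of the fact that the differential operator introduced by the skew-symmetry replacement combines with the $T$ already present in the constant term of $b_\mu a$, producing the cancellation that leaves only the $-\lambda(b\cdot a)$ contribution. As a consistency check, specializing to the purely even case $V_{\overline{1}}=0$ recovers the classical fact that a Novikov-Poisson algebra gives a Gel'fand-Dorfman bialgebra, matching Theorem~\ref{TTa1}.
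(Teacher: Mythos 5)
Your proposal is correct and follows exactly the route the paper intends: the paper derives this corollary by simply composing Propositions~\ref{pp2} and~\ref{pp1}, and your explicit computation of $b_{-\lambda-T}a=-\lambda(b\cdot a)+b\circ a$ (with the cancellation of the two $T$-terms) correctly fills in the bookkeeping the paper leaves implicit. No issues.
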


\begin{remark}
In fact, by Corollary \ref{cccc1} and Remark \ref{ra1}, we can obtain a super Gel'fand-Dorfman bialgebra from a Novikov-Poisson superalgebra.
The construction is as follows.

Let $(V, \circ, \cdot)$ is a Novikov-Poisson superalgebra, then $(A=V\otimes \mathbf{C}[t,t^{-1}],\ast,[\cdot,\cdot])$
is a super Gel'fand-Dorfman bialgebra with the following Novikov superalgebra structure and the Lie superalgebra structure:
\begin{eqnarray*}
&&(a\otimes t^m) \ast (b\otimes t^n)=(a\circ b)\otimes t^{m+n}-n(a\cdot b)\otimes t^{m+n-1},\\
&&[a\otimes t^m,b\otimes t^n]=(m-n)(a\cdot b)\otimes t^{m+n-1}-(a\circ b-(-1)^{\alpha\beta}b\circ a)\otimes t^{m+n},
\end{eqnarray*}
where $a\in V_{\alpha}$, $b\in V_{\beta}$, $m$, $n\in \mathbb{Z}$.

\end{remark}
Next, we present several examples of super Gel'fand-Dorfman conformal bialgebras.
\begin{example}
By Proposition \ref{pp3} and Proposition \ref{pp1}, we have the following Gel'fand-Dorfman conformal bialgebra $R=\mathbb{C}[T]x$ with the $\lambda$-product and $\lambda$-bracket as follows ($a\in \mathbb{C}$):
\begin{eqnarray*}
&&x_\lambda x=(\lambda+T+a)x,\\
&&[x_\lambda x]=(T+2\lambda)x.
\end{eqnarray*}

This Gel'fand-Dorfman conformal bialgebra corresponds to a 2-dim $2$-linear Lie conformal algebra
$\widetilde{R}=\mathbb{C}[T_1,T_2]x$ with the following $\lambda$-bracket:
$$[x_\lambda x]=T_2(-\lambda_1+a)x+\lambda_2(T_1+2a)x+(T_1+2\lambda_1)x.$$

$\text{Lie}\widetilde{R}=\mathbb{C}x\otimes \mathbb{C}[t_1^{\pm 1},t_2^{\pm 1}]$ is a Lie algebra with the following Lie bracket:
\begin{eqnarray*}
&&[x\otimes t_1^{m_1}t_2^{n_1}, x\otimes t_1^{m_2}t_2^{n_2}]\\
&&=a(n_1-n_2)x\otimes t_1^{m_1+m_2}t_2^{n_1+n_2-1}
+(m_1-m_2)x\otimes t_1^{m_1+m_2-1}t_2^{n_1+n_2}\\
&&+(m_1n_2-n_1m_2)x\otimes t_1^{m_1+m_2-1}t_2^{n_1+n_2-1},
\end{eqnarray*}
for $m_1$, $m_2$, $n_1$, $n_2\in \mathbb{Z}$.
\end{example}
\begin{example}
Let $R=R_{\overline{0}}\oplus R_{\overline{1}}=\mathbb{C}[T]a\oplus\mathbb{C}[T]b$ be the Novikov conformal superalgebra given in
Example \ref{ex1}. Then, by Proposition \ref{pp1}, we obtain a super Gel'fand-Dorfman bialgebra $(R, \cdot_\lambda \cdot, [\cdot_\lambda \cdot])$ given by as follows:
\begin{eqnarray*}
&&a_{\lambda}a=(\lambda+T+C_1)a,~~~a_{\lambda}b=(\lambda+T+C_1)b,~~~b_{\lambda}a=(\lambda+T+C_2)b,~~b_{\lambda}b=0,\\
&&[a_{\lambda} a]=(T+2\lambda)a,~~~[a_{\lambda} b]=(T+2\lambda+C_1-C_2)b,~~~[b_{\lambda}b]=0,
\end{eqnarray*}
for $C_1$, $C_2\in \mathbb{C}$.

This Gel'fand-Dorfman conformal bialgebra corresponds to a 2-dim $2$-linear Lie conformal algebra $\widetilde{R}=\mathbb{C}[T_1,T_2]a \oplus\mathbb{C}[T_1,T_2]b$ with the following $\lambda$-bracket:
\begin{eqnarray*}
&&[a_{\lambda}a]=T_2(-\lambda+C_1)a+\lambda_2(T_1+2C_1)a+(T_1+2\lambda_1)a,\\
&&[a_{\lambda}b]=T_2(-\lambda+C_2)b+\lambda_2(T_1+C_1+C_2)b+(T_1+2\lambda_1+C_1-C_2)b,\\
&&[b_{\lambda}b]=0,
\end{eqnarray*}
for $C_1$, $C_2\in \mathbb{C}$.
\end{example}
\begin{example}
If $(A,\cdot)$ is a commutative associative algebra and $D: A\rightarrow A$ is a derivation of $A$, then $(A, \circ, \cdot)$ is a Novikov-Poisson algebra with
$$a\circ b=a\cdot Db.$$
Then, by Corollary \ref{cccc1}, we obtain a $2$-dim $2$-linear Lie conformal algebra $R=\mathbb{C}[T_1,T_2]A$ in 2 dimension with the following $\lambda$-product and $\lambda$-bracket:
\begin{eqnarray*}
[a_\lambda b]&=&T_2(-\lambda_1(a\cdot b)+b\cdot Da)+\lambda_2(T_1(a\cdot b)+b\cdot Da+a\cdot Db)\\
&&+(T_1+2\lambda_1)(a\cdot b)+(a\cdot Db-b\cdot Da), ~~~a,~b\in A.
\end{eqnarray*}

\end{example}

Next, we present another construction of Gel'fand-Dorfman conformal bialgebras.
\begin{proposition}
If $(V,[\cdot,\cdot],\circ)$ is a Gel'fand-Dorfman bialgebra, $(V,\circ,\cdot)$ is Novikov-Poisson algebra and $(V, [\cdot,\cdot],\cdot)$ is a Lie-Poisson algebra, then $R=\mathbb{C}[T]V$ is a Gel'fand-Dorfman conformal bialgebra with the following $\lambda$-product and $\lambda$-bracket ($a\in V_\alpha, b\in V_\beta$):
\begin{eqnarray*}
&&a_\lambda b=(\lambda+T)(a\cdot b)+a\circ b,\\
&&[a_\lambda b]=T(b\circ a)+\lambda(a\circ b+b\circ a)+[b,a].
\end{eqnarray*}
The quadruple $(V, [\cdot,\cdot], \circ, \cdot)$ is called \emph{Gel'fand-Dorfman Novikov-Poisson algebra}.
\end{proposition}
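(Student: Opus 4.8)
The plan is to check the three constituents of the definition of a Gel'fand-Dorfman conformal bialgebra in turn: that $(R,{\cdot}_{\lambda}{\cdot})$ is a Novikov conformal algebra, that $(R,[{\cdot}_{\lambda}{\cdot}])$ is a Lie conformal algebra, and that the compatibility condition (\ref{ccc1}) holds (in one dimension, i.e.\ with $\overline{\mathbf{T}}_i,\overline{\mathbf{\lambda}}_i,\overline{\mathbf{\mu}}_i$ replaced by $T,\lambda,\mu$, and with all signs trivial since $V$ is purely even). The first is free: the $\lambda$-product $a_{\lambda}b=(\lambda+T)(a\cdot b)+a\circ b$ is precisely the one produced by Proposition \ref{pp2} out of the Novikov-Poisson algebra $(V,\circ,\cdot)$, so $(R,{\cdot}_{\lambda}{\cdot})$ is a Novikov conformal algebra. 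For the second, note that the $\lambda$-bracket $[a_{\lambda}b]=T(b\circ a)+\lambda(a\circ b+b\circ a)+[b,a]$ is linear in $\lambda$ and $T$, hence of the form (\ref{a4}); comparing with the correspondence in Theorem \ref{TTa1}, it is exactly the Lie conformal bracket attached to the Gel'fand-Dorfman bialgebra $(V,[\cdot,\cdot],\circ)$, so $(R,[{\cdot}_{\lambda}{\cdot}])$ is a Lie conformal algebra. If one prefers, this can be checked directly: (\ref{a6}) holds by construction; (\ref{a7}) follows from a one-line formal computation needing only the antisymmetry $[a,b]=-[b,a]$ and the sesquilinearity rules, the $\circ$-terms reproducing themselves; and (\ref{a8}), after collecting the coefficients of $\mu$, $\lambda$, $T$ and $1$, reduces to the Jacobi identity for $[\cdot,\cdot]$, the two Novikov identities for $\circ$, and the Gel'fand-Dorfman compatibility (\ref{xx1}).

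The real work is the compatibility (\ref{ccc1}). I would substitute the two explicit formulas for ${\cdot}_{\lambda}{\cdot}$ and $[{\cdot}_{\lambda}{\cdot}]$ into its left-hand side and expand each of the five summands inside $R=\mathbb{C}[T]V$, using the sesquilinearity rules to move every $T$ to the left; each summand then becomes a polynomial in $\lambda,\mu,T$ whose coefficients lie in $V$ and are built from $a,b,c$ by $\cdot$, $\circ$ and $[\cdot,\cdot]$. Collecting the coefficients of the finitely many monomials $\lambda^{i}\mu^{j}T^{k}$ turns (\ref{ccc1}) into a finite system of multilinear identities in $V$. Because each of the five summands contains at most one factor $[\cdot,\cdot]$ (the Novikov conformal product contributes none and the Lie conformal bracket contributes at most one), these identities split into two groups: the bracket-free ones, built only from $\cdot$ and $\circ$, which should follow from the Novikov-Poisson axioms for $(V,\circ,\cdot)$ exactly as in the proof of Proposition \ref{pp2} and equation (\ref{d1}); and the ones linear in $[\cdot,\cdot]$, which further split into those carrying an extra $\circ$, reducing to the Gel'fand-Dorfman compatibility (\ref{xx1}) as in the displayed computation in the proof of Proposition \ref{pp1}, and those carrying an extra $\cdot$, reducing to the Lie-Poisson identity $[a,b\cdot c]=[a,b]\cdot c+b\cdot[a,c]$.

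Thus the hard part is purely organizational: arranging the expansion so that the cancellations within each group are transparent, and in particular keeping track of the cross-terms in which the commutative-associative part $(\lambda+T)(a\cdot b)$ of one $\lambda$-product meets the $\circ$-part $T(b\circ a)+\lambda(a\circ b+b\circ a)$ of the $\lambda$-bracket, or vice versa — these are precisely the terms that force the simultaneous use of all three hypotheses (Gel'fand-Dorfman bialgebra, Novikov-Poisson algebra, Lie-Poisson algebra), i.e.\ the full ``Gel'fand-Dorfman Novikov-Poisson algebra'' axiomatics. Once the bookkeeping is in place, each group closes by a direct if lengthy substitution of the corresponding axiom, in the same style as the verifications of (\ref{c4}) and (\ref{c8}) in the proof of Theorem \ref{TT1}.
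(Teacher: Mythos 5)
Your proposal is sound and, if anything, more informative than the paper's own proof, which consists of the single sentence ``It can be check[ed] directly.'' Your organization of the direct check---reducing the Novikov conformal and Lie conformal structures to Proposition \ref{pp2} and Theorem \ref{TTa1}, and then splitting the compatibility condition (\ref{ccc1}) by the number of occurrences of $[\cdot,\cdot]$, with the bracket-free part handled by the Novikov-Poisson axioms, the bracket-plus-$\circ$ part by the Gel'fand-Dorfman compatibility (\ref{xx1}), and the bracket-plus-$\cdot$ part by the Lie-Poisson identity---is exactly the right way to carry out that verification.
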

\begin{proof}
It can be check directly.
\end{proof}

\begin{remark}
If the operation $``\cdot"$ in the above proposition is trival, then we can obtain a construction of  Gel'fand-Dorfman conformal bialgebras from Gel'fand-Dorfman bialgebras.

If $(V,[\cdot,\cdot],\circ)$ is a Gel'fand-Dorfman bialgebra, then $R=\mathbb{C}[T]V$ is a
Gel'fand-Dorfman conformal bialgebra with the following $\lambda$-product and $\lambda$-bracket ($a\in V_\alpha, b\in V_\beta$):
\begin{eqnarray*}
&&a_\lambda b=a\circ b,\\
&&[a_\lambda b]=T(b\circ a)+\lambda(a\circ b+b\circ a)+[b,a].
\end{eqnarray*}

In fact, by Remark \ref{ra1}, we can also obtain a construction of Gel'fand-Dorfman bialgebras as follows.

If $(V,[\cdot,\cdot],\circ)$ is a Gel'fand-Dorfman bialgebra, then $(\text{Lie}R=V\otimes \mathbb{C}[t,t^{-1}],$ $\ast,$ $ [\cdot,\cdot])$ is a Gel'fand-Dorfman bialgebra with the Novikov operation and Lie algebra operation as follows ($a\in V_\alpha, b\in V_\beta$, $m$, $n\in \mathbb{C}$):
\begin{eqnarray*}
&&a\otimes t^m \ast b\otimes t^n=(a\circ b)\otimes t^{m+n},\\
&&[a\otimes t^m, b\otimes t^n]=(ma\circ b-nb\circ a)\otimes t^{m+n-1}+[b,a]\otimes t^{m+n},
\end{eqnarray*}

\end{remark}

We give a construction of Gel'fand-Dorfman Novikov-Poisson algebras in the following.
\begin{example}
Let $(A,\cdot,[\cdot,\cdot])$ be a Lie-Poisson algebra and let $d$ be a derivation of the algebra $(A,\cdot)$ such that
\begin{eqnarray}
d[u,v]=[d(u),v]+[u,d(v)]+\xi[u,v],~~~~~\text{for}~~~u,~~v\in A,
\end{eqnarray}
where $\xi\in \mathbb{C}$ is a constant. We define another algebraic operation $\circ$ on $A$ by
\begin{eqnarray}
u\circ v=u\cdot d(v)+\xi u\cdot v ~~~~~~~\text{for}~~~u,~~v\in A.
\end{eqnarray}
By Corollary 2.6 in \cite{X3}, $(A,\cdot,\circ)$ is a Novikov-Poisson algebra. And, by Theorem 3.2 in \cite{Xu1}, $(A,[\cdot,\cdot],\circ)$ forms a Gel'fand-Dorfman bialgebra. Therefore,  $(V, [\cdot,\cdot], \circ, \cdot)$ is a Gel'fand-Dorfman Novikov-Poisson algebra.
\end{example}

Finally, we will present a theorem about an equivalent characterization of $r$-dim linear Lie conformal superalgebra.
\begin{theorem}\label{T1}
An $r$-dim linear Lie conformal superalgebra $R=\mathbb{C}[\mathbf{T}]V$ is equivalent to that $V=V_{\overline{0}}\oplus V_{\overline{1}}$ is equipped with one Lie superalgebraic operation
$[\cdot,\cdot]$, and $r$ Novikov superalgebraic operations $\circ_i$ such that $(V, [\cdot,\cdot], \circ_i)$ is a super Gel'fand-Dorman bialgebra for each $1\leq i\leq r$ and $\circ_i$ and $\circ_j$ satisfying the following conditions:
\begin{eqnarray}
&\label{T2}a\circ_i(b\circ_jc)-(a\circ_i b)\circ_jc=(-1)^{\alpha\beta}[b\circ_j(a\circ_ic)-(b\circ_j a)\circ_i c],\\
&\label{T3}(c\circ_i a)\circ_j b+(c\circ_j a)\circ_i b=(-1)^{\alpha\beta}[(c\circ_i b)\circ_ja+(c\circ_j b)\circ_ia],
\end{eqnarray}
for $a\in V_{\alpha}$, $b\in V_{\beta}$ , $c\in V$ and $i\neq j$. We can such $(V, [\cdot,\cdot], \circ_1,\cdots,\circ_r)$ a \emph{generalized super Gel'fand-Dorfman algebra}.
\end{theorem}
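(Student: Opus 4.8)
The plan is to decompose the $\lambda$-bracket of an $r$-dim linear Lie conformal superalgebra $R=\mathbb{C}[\mathbf{T}]V$ into its $T_i$-part, $\lambda_i$-part and constant part, to translate the Lie conformal superalgebra axioms (\ref{a6})--(\ref{a8}) into identities among the resulting bilinear operations on $V$, and then to read off the algebraic structure. By (\ref{a4}) we may write, for $a\in V_{\alpha}$, $b\in V_{\beta}$,
\[
[a_{\mathbf{\lambda}}b]=\sum_{i=1}^{r}T_i\,p_i(a,b)+\sum_{i=1}^{r}\lambda_i\,q_i(a,b)+s(a,b),
\]
where $p_i,q_i,s\colon V\times V\to V$ are parity-preserving bilinear maps. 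Substituting this expansion into the skew-symmetry axiom (\ref{a7}) and comparing coefficients of $T_i$, $\lambda_i$ and $1$ (which is legitimate because $R$ is free over $\mathbb{C}[\mathbf{T}]$), exactly as at the start of the proof of Theorem \ref{TT1}, shows that $s$ is super skew-symmetric, each $q_i$ is super-symmetric, and $q_i(a,b)=p_i(a,b)+(-1)^{\alpha\beta}p_i(b,a)$; in particular the $\lambda_i$-parts are redundant, being determined by the $T_i$-parts. One then defines $[\cdot,\cdot]$ from $s$ and $\circ_i$ from $p_i$, with the flip as in (\ref{aa1}), so as to match the conventions of Theorem \ref{TTa1}. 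Axiom (\ref{a6}) yields nothing new: it is precisely the sesquilinear recipe by which the bracket is extended from $V\otimes V$ to all of $R\otimes R$.

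The substantive step is to feed this decomposition into the Jacobi identity (\ref{a8}). Expanding $[a_{\mathbf{\lambda}}[b_{\mathbf{\mu}}c]]$, $[[a_{\mathbf{\lambda}}b]_{\mathbf{\lambda}+\mathbf{\mu}}c]$ and $[b_{\mathbf{\mu}}[a_{\mathbf{\lambda}}c]]$ by sesquilinearity, so that every $T_i$ is pulled outside, and comparing coefficients of each monomial of degree $\leq 2$ in the variables $T_1,\dots,T_r,\lambda_1,\dots,\lambda_r,\mu_1,\dots,\mu_r$, one obtains a system of bilinear identities on $V$ which splits into two groups. The \emph{diagonal} identities, coming from monomials in $\{T_i,\lambda_i,\mu_i\}$ for a single fixed index $i$, are term for term the relations that appear in the proof of Theorem \ref{TTa1}; equivalently, the one-variable sub-bracket $T_i\,p_i(a,b)+\lambda_i\,q_i(a,b)+s(a,b)$ defines a linear Lie conformal superalgebra structure on $\mathbb{C}[T_i]V$, so by Theorem \ref{TTa1} the diagonal identities say exactly that $(V,[\cdot,\cdot],\circ_i)$ is a super Gel'fand-Dorfman bialgebra for each $i$ (this encodes the Novikov axioms for $\circ_i$, the Jacobi identity for $[\cdot,\cdot]$, and the compatibility (\ref{xx1})). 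The \emph{off-diagonal} identities, coming from monomials that mix two distinct indices $i\neq j$, contain in particular the two stated conditions (\ref{T2}) and (\ref{T3}) --- which are exactly the polarizations of the two Novikov axioms, reducing to them when $i=j$. One then has to show that all the remaining off-diagonal identities follow from (\ref{T2}), (\ref{T3}) together with the per-index super Gel'fand-Dorfman axioms, by the same kind of manipulations used in the proof of Theorem \ref{TT1} to derive (\ref{c2}), (\ref{c4}), (\ref{c5}), (\ref{c6}), (\ref{c8}), (\ref{c9}) from (\ref{c1}), (\ref{d1}), (\ref{c7}).

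For the converse, given a generalized super Gel'fand-Dorfman algebra $(V,[\cdot,\cdot],\circ_1,\dots,\circ_r)$, one defines the $\lambda$-bracket on $\mathbb{C}[\mathbf{T}]V$ by the displayed formula (with $p_i,q_i,s$ recovered from $\circ_i$ and $[\cdot,\cdot]$ as above) and extends it sesquilinearly, so that (\ref{a6}) holds and $R$ is a free $\mathbb{C}[\mathbf{T}]$-module over $V$. Axiom (\ref{a7}) then follows from the super skew-symmetry of $[\cdot,\cdot]$ and the super-symmetry of the $q_i$, and (\ref{a8}) follows by checking that every monomial coefficient in the expansion vanishes: the diagonal coefficients vanish because each $(V,[\cdot,\cdot],\circ_i)$ is a super Gel'fand-Dorfman bialgebra (Theorem \ref{TTa1}), and the off-diagonal ones vanish by (\ref{T2}), (\ref{T3}) and the consequences established in the forward direction.

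The main obstacle is the middle step: with $2r$ spectral variables and $r$ translation variables, the number of monomials to track is large, and the real work lies in organising the off-diagonal identities so as to confirm that they are all generated by just (\ref{T2}) and (\ref{T3}). A cleaner but essentially equivalent route is induction on $r$ by means of Theorem \ref{TT1}: an $r$-dim linear Lie conformal superalgebra is in particular $r$-linear, hence corresponds to an $(r-1)$-dim super Gel'fand-Dorfman conformal bialgebra which is itself ``linear'' in the remaining $r-1$ translation variables; peeling off one variable at a time and keeping track of how the various compatibility conditions accumulate should reproduce the $r$ super Gel'fand-Dorfman bialgebra structures together with (\ref{T2}) and (\ref{T3}).
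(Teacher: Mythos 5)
Your proposal is correct and follows essentially the same route as the paper: the paper's own proof simply writes the bracket in the decomposed form $[a_{\mathbf{\lambda}}b]=\sum_i T_i(b\circ_i a)+\sum_j\lambda_j(b\ast_j a)+[b,a]$ with $b\ast_i a=(b\circ_i a)+(-1)^{\alpha\beta}a\circ_i b$ and then declares the rest ``similar to Theorem \ref{TT1}'', i.e.\ the same coefficient comparison you describe. Your identification of the diagonal identities with the one-variable case (Theorem \ref{TTa1}) and of (\ref{T2})--(\ref{T3}) as the mixed-index polarizations is exactly what that omitted comparison yields, so your write-up is, if anything, more explicit than the paper's.
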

\begin{proof}
By (\ref{a4}), we can assume that:
\begin{eqnarray}\label{a5}
[a_{\mathbf{\lambda}}b]=\sum_{i=1}^rT_i(b\circ_ia)+\sum_{j=1}^r\lambda_j(b\ast_ja)+[b,a]~~~\text{for}~~a,~b\in V,
\end{eqnarray}
where $\circ_i$, $\ast_j$, $[\cdot,\cdot]$ are algebraic operations on $V$.
Then, the proof is similar to that in Theorem \ref{TT1}. Here, $b\ast_ia=(b\circ_i a)+(-1)^{\alpha\beta}a\circ_i b$, where $a\in V_{\alpha}$,
$b\in V_{\beta}$.
\end{proof}

\begin{remark}
If $(V,[\cdot,\cdot],\circ_i)$ is a super Gel'fand-Dorfman bialgebra for one of $1\leq i\leq r$, then $(V,[\cdot,\cdot],\circ_1, \cdots, \circ_r)$ with $\circ_j$ trivial for each $1\leq j\leq r$ and $j\neq i$ is a generalized super Gel'fand-Dorfman algebra, and its corresponding $r$-dim Lie conformal superalgebra $R=\mathbb{C}[\mathbf{T}]V$ is determined the following $\mathbf{\lambda}$-bracket:
$$[a_{\mathbf{\lambda}}b]=T_i(b\circ_ia)+\lambda_i(b\circ_ia+(-1)^{\alpha\beta}a\circ_ib)+[b,a],$$
for $a\in V_{\alpha}$ and $b\in V_{\beta}$.

In addition, by Theorem \ref{T1}, we can classify $r$-dim linear Lie conformal superalgebras by classifying generalized super Gel'fand-Dorfman algebras. One useful method to classify generalized super Gel'fand-Dorfman algebras is that given a Lie superalgebra, we first classify all Novikov superalgebra structures compatible with this Lie superalgebra, i.e. constructing a super Gel'fand-Dorfman bialgebra and then study which Novikov superalgebra structures satisfy (\ref{T2}) and (\ref{T3}).
\end{remark}

\begin{example}
Let $V=\mathbb{C}e_1\oplus \mathbb{C}e_2$. Then, $(V,[\cdot,\cdot])$ is a Lie algebra with the Lie bracket $[e_1,e_2]=e_2$.

Set $e_i\circ e_j=\sum_{l=1}^2c_{ij}^le_l$ for any $i$, $j\in\{1,2\}$.  If $[\cdot,\cdot]$ and $\circ$ satisfy
(\ref{xx1}), by some computations, we can obtain
$$c_{11}^1=c_{21}^2, c_{12}^1+c_{21}^1=0, c_{22}^1=0, c_{21}^1=c_{22}^2.$$
For simplicity, set $a=c_{11}^1=c_{21}^2$, $b=c_{11}^2$, $c=c_{12}^2$, $d=c_{21}^1=c_{22}^2=-c_{12}^1$.
Then, if $(V,\circ)$ is a Novikov algebra, we get $cd=bd=0$.

Therefore, $(V,[\cdot,\cdot],\circ)$ is a Gel'fand-Dorfman bialgebra if and only if the Novikov algebraic operation is defined by
$$e_1\circ e_1=ae_1+be_2,~~e_1\circ e_2=-de_1+ce_2,~~e_2\circ e_1=de_1+ae_2,~~e_2\circ e_2=de_2,$$
with $cd=bd=0$.

Assume that $(V,[\cdot,\cdot],\circ_1)$ and $(V,[\cdot,\cdot],\circ_2)$ are two Gel'fand-Dorfman bialgebras with the Novikov algebraic operations
\begin{eqnarray}\label{wq1}
&&e_1\circ_i e_1=a_ie_1+b_ie_2,~e_1\circ_i e_2=-d_ie_1+c_ie_2,\\
&&\label{wq2}e_2\circ_i e_1=d_ie_1+a_ie_2,~e_2\circ_i e_2=d_ie_2,
\end{eqnarray}
with \begin{eqnarray}\label{wq3}
c_id_i=b_id_i=0,~~~~i\in\{1,2\}.
\end{eqnarray} For $\circ_1$ and $\circ_2$ satisfying (\ref{T2}) and (\ref{T3}), by some computations,
we have
\begin{eqnarray}\label{wq4}
a_1c_2=a_2c_1,~~~b_2(c_1-a_1)=b_1(c_2-a_2).
\end{eqnarray}

Therefore, $(V,[\cdot,\cdot],\circ_1,\circ_2)$ is a generalized Gel'fand-Dorfman algebra with the Novikov algebraic operations defined by
(\ref{wq1}),(\ref{wq2}) if and only if (\ref{wq3}) and (\ref{wq4}) hold.

For example, $(V=\mathbb{C}e_1\oplus \mathbb{C}e_2,[\cdot,\cdot],\circ_1,\circ_2)$ is a generalized Gel'fand-Dorfman algebra with the Lie algebra structure and Novikov algebra structures as follows
\begin{eqnarray*}
&&[e_1,e_2]=e_2, \\
&&e_1\circ_1 e_1=e_2,~~e_1\circ_1 e_2=e_2\circ_1 e_1=e_2\circ_1 e_2=0,\\
&&e_1\circ_2 e_1=e_1,~~e_1\circ_2 e_2=e_2,~~e_2\circ_2 e_1=e_2\circ_2 e_2=0.
\end{eqnarray*}
It corresponds to a 2-dim linear Lie conformal algebra $R=\mathbb{C}[T_1,T_2]V$ with the following $\lambda$-bracket
\begin{eqnarray*}
&&[{e_1}_\lambda e_1]=(T_1+2\lambda_1)e_2+(T_2+2\lambda_2)e_1,\\
&&[{e_1}_\lambda e_2]=\lambda_2e_2-e_2,\\
&&[{e_2}_\lambda e_2]=0.
\end{eqnarray*}
\end{example}

\begin{proposition}
Let $(V,[\cdot,\cdot], \circ_i)$ be a super Gel'fand-Dorfman bialgebra for some $i$. Assume that
other operations $\circ_j$ for $1\leq j\leq r$ and $j\neq i$ are defined as follows:
\begin{eqnarray}
a\circ_jb=k_ja\circ_ib,~~\text{for some $k_j\in \mathbb{C}$.}
\end{eqnarray}
Then, $(V,[\cdot,\cdot],\circ_1, \cdots, \circ_r)$ is a generalized super Gel'fand-Dorfman algebra.
\end{proposition}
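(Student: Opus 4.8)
The plan is to check the three requirements appearing in the characterization of Theorem~\ref{T1} for the tuple $(V,[\cdot,\cdot],\circ_1,\dots,\circ_r)$: that each $(V,[\cdot,\cdot],\circ_j)$ is a super Gel'fand-Dorfman bialgebra, and that every pair $(\circ_p,\circ_q)$ with $p\neq q$ satisfies (\ref{T2}) and (\ref{T3}). Throughout I would write $\circ:=\circ_i$ and record the uniform relation $a\circ_m b=k_m\,a\circ b$ with the convention $k_i=1$, so that all the $\circ_m$ differ from $\circ$ only by rescaling. The crucial structural remark is that every identity in play is homogeneous in the $\circ$-operations: the two Novikov superalgebra axioms are quadratic in $\circ$, while the compatibility relation (\ref{xx1}) is multilinear with exactly one $\circ$-factor per summand, and (\ref{T2}), (\ref{T3}) are multilinear with exactly two $\circ$-factors per summand. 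Hence each identity, after substitution, becomes a fixed scalar multiple of an identity already known to hold for $\circ_i$.

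First I would verify the individual bialgebra structures. For $j\neq i$, inserting $a\circ_j b=k_j\,a\circ b$ into the two Novikov superalgebra axioms scales both sides by $k_j^2$, so $(V,\circ_j)$ is again a Novikov superalgebra; inserting it into (\ref{xx1}) scales the whole left-hand side by $k_j$, so compatibility with the Lie superalgebra $(V,[\cdot,\cdot])$ is preserved. Thus $(V,[\cdot,\cdot],\circ_j)$ is a super Gel'fand-Dorfman bialgebra for every $j$, the case $j=i$ being the hypothesis.

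Next I would verify the cross conditions for an arbitrary distinct pair $p\neq q$. Substituting $\circ_p=k_p\circ$ and $\circ_q=k_q\circ$, the left and right sides of (\ref{T2}) both acquire the factor $k_pk_q$, so (\ref{T2}) reduces (when $k_pk_q\neq 0$; otherwise every term vanishes and the identity is trivial) to
\[
a\circ(b\circ c)-(a\circ b)\circ c=(-1)^{\alpha\beta}\bigl(b\circ(a\circ c)-(b\circ a)\circ c\bigr),
\]
which is exactly the second Novikov axiom for $\circ_i$ rearranged. Likewise both sides of (\ref{T3}) become $2k_pk_q$ times
\[
(c\circ a)\circ b=(-1)^{\alpha\beta}(c\circ b)\circ a,
\]
which is exactly the first Novikov axiom for $\circ_i$ with $a\in V_\alpha$, $b\in V_\beta$, $c\in V$. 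Both conditions therefore hold, and Theorem~\ref{T1} gives that $(V,[\cdot,\cdot],\circ_1,\dots,\circ_r)$ is a generalized super Gel'fand-Dorfman algebra.

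I do not expect a genuine obstacle here; once the homogeneity remark is made, the proof is essentially bookkeeping. The only points that need care are checking (\ref{T2}) and (\ref{T3}) for \emph{all} distinct pairs $(p,q)$ — including those with $p,q\neq i$, where the common factor is $k_pk_q$ rather than a single $k_j$ — and keeping the Koszul signs $(-1)^{\alpha\beta}$ aligned so that the reduced identities match the Novikov axioms of $\circ_i$ exactly as stated.
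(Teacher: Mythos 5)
Your proposal is correct and matches the paper's (unwritten) argument: the paper's proof is just ``It is easy to check,'' and the natural check is exactly the one you carry out, namely verifying the conditions of Theorem~\ref{T1} by observing that each identity is homogeneous in the $\circ$-operations and so reduces, after the substitution $\circ_m=k_m\circ_i$, to a known Novikov axiom or to (\ref{xx1}) for $\circ_i$. Your reductions of (\ref{T2}) to the second Novikov axiom and of (\ref{T3}) to the first (with the arguments relabelled as $c,a,b$) are both accurate, including the signs.
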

\begin{proof}
It is easy to check.
\end{proof}

\end{document}